\renewcommand\subsubsection{\@startsection{subsubsection}{3}%
  \z@{1ex \@plus .2ex \@minus .1ex}{-1em}%
  {\normalfont\normalsize\itshape}}
\DeclareMathOperator*{\argmin}{arg\,min}
\def\Xint#1{\mathchoice
{\XXint\displaystyle\textstyle{#1}}%
{\XXint\textstyle\scriptstyle{#1}}%
{\XXint\scriptstyle\scriptscriptstyle{#1}}%
{\XXint\scriptscriptstyle\scriptscriptstyle{#1}}%
\!\int}
\def\XXint#1#2#3{{\setbox0=\hbox{$#1{#2#3}{\int}$ }
\vcenter{\hbox{$#2#3$ }}\kern-.6\wd0}}
\def\dashint{\Xint-}
\newtheorem{theorem}{Theorem}[section]
\newtheorem{proposition}[theorem]{Proposition}  
\newtheorem{lemma}[theorem]{Lemma}
\theoremstyle{definition}
\newtheorem{definition}[theorem]{Definition}
\newtheorem{example}[theorem]{Example}
\theoremstyle{remark}
\newtheorem{remark}[theorem]{Remark}
\numberwithin{equation}{section}
\newcommand{\R}{{ \mathbb{R}  }}
\newcommand{\N}{{ \mathbb{N}  }}
\newcommand{\p}{{ \partial  }}
\newcommand{\al}{{ \alpha }}
\newcommand{\Om}{{ \Omega  }}
\newcommand{\lam}{{ \lambda  }}
\newcommand{\calM}{{\mathcal M}}\newcommand{\calN}{{\mathcal N}}
\newcommand{\calE}{{\mathcal E}}
\newcommand{\calH}{{\mathcal H}}
\newcommand{\del}{\delta}
\newcommand{\gam}{\gamma}
\newcommand{\vphi}{\varphi}
\newcommand{\na}{\nabla}
\newcommand{\T}{\tau}
\newcommand{\Th}{\Theta}
\newcommand{\z}{\zeta}	
\newcommand{\ra}{\rightarrow}	
\newcommand{\da}{\downarrow}
\newcommand{\OM}{\overline{\Omega}}
\newcommand{\ep}{\varepsilon}
\newcommand{\wra}{\rightharpoonup}
\newcommand{\lt}{\left}
\newcommand{\rt}{\right}
\newcommand{\lag}{\langle}	
\newcommand{\rag}{\rangle}
\def\dashint{\Xint-}
\def\BigRoman{\uppercase\expandafter{\romannumeral\number\count
255 }}
\def\Romannumeral{\afterassignment\BigRoman\count255=}
\author{Dongkwang Kim}
\address{Dongkwang Kim: Department of Mathematical Sciences \newline Ulsan National Institute of Science and Technology
(UNIST), Ulsan, Republic of Korea}
\email{dkkim@unist.ac.kr}
\author{Dowan Koo}
\address{Dowan Koo: School of mathematics and computing (mathematics) \newline Yonsei University, Seoul 03722, Republic of Korea}
\email{dowan.koo@yonsei.ac.kr}
\author{Geuntaek Seo}
\address{Geuntaek Seo:  Department of Mathematics\newline POSTECH, Pohang 37673,  Republic of Korea}
\email{gtseo@postech.ac.kr}
\begin{document}
\captionsetup[figure]{labelfont={rm},labelformat={default},labelsep=period,name={Fig.}}

\title[A gradient flow for The PME with Dirichlet boundary conditions]{A gradient flow for The Porous medium equations with Dirichlet boundary conditions}

\keywords{Gradient flows; Wasserstein space; Optimal transport; Entropy functional; Porous Medium Equation; }
\begin{abstract}
We consider the gradient flow structure of the porous medium equations with non-negative constant Dirichlet boundary conditions. We construct weak solutions to the equations via the minimizing movement scheme by considering an entropy functional with respect to $Wb_2$ distance, which is a modified Wasserstein distance introduced by Figalli and Gigli [J. Math. Pures Appl. 94, (2010), pp. 107-130]. 
Furthermore, the constructed solutions are characterized as curves of maximal slope in a suitable sense.
\end{abstract}

\maketitle



\section{Introduction}
The porous medium equation (PME) has been extensively studied in mathematics and physics over the past decades. Many works have investigated its properties, particularly from the perspective of partial differential equation analysis (see \cite{Vaz1, Vaz2} and references therein). In this paper, we aim to establish the gradient flow formulation of the PME with Dirichlet boundary conditions within the framework of the optimal transport.

\subsection{Historical background}
We briefly review the historical background of Wasserstein gradient flows in the context of evolutionary PDEs, as well as recent work addressing Dirichlet boundary problems.

\subsubsection*{Development of Wasserstein gradient flow}  
A foundational result in the study of Wasserstein gradient flows is the work of Jordan, Kinderlehrer, and Otto \cite{JKO}. The authors demonstrated that solutions to the linear Fokker-Planck equation
$\partial_t \rho = \Delta \rho + \nabla \cdot (\rho \nabla V)$
can be obtained as the limit of time-discrete variational approximations (the so-called JKO scheme) for the energy functional 
$\int \rho \log\rho + V\rho dx$
with respect to the $2$-Wasserstein distance $W_2$, a complete metric on the set of probability measures with finite second moment.
Furthermore, Otto \cite{O} interpreted PME $\p_t \rho = \Delta \rho^\al$, with $\al>1$, as a Wasserstein gradient flow for the internal energy $\int \rho^\alpha/(\alpha-1) dx$, by generalizing concepts from Riemannian geometry to the Wasserstein space. 
After these two seminal works, the theory of Wasserstein gradient flow for various evolutionary PDEs has been successfully developed in bounded domains with Neumann boundary conditions and in the whole space setting; see e.g. \cite{AGS, CMV1, FM, San0}.
For related results in the Riemannian manifold setting, see e.g. \cite{CMV2, DMO, E, OS, OW, Stu, Vil2}.

\subsubsection*{The work of Figalli and Gigli}
The framework of Wasserstein gradient flows in the space of probability measures presupposes mass conservation. This approach is thus not applicable to evolutionary PDEs that do not conserve mass, such as Dirichlet boundary problems.
Figalli and Gigli \cite{FG} introduced a new transportation distance between two non-negative measures on a bounded open set $\Omega \subset \R^d$ with possibly different masses. Their idea was to interpret the boundary $\p \Omega$ as an infinite `reservoir' of mass so that the mass can be exchanged through the boundary.

Precisely, the Figalli-Gigli distance $Wb_2$ is defined on a subset of non-negative (possibly infinite) Borel measures on $\Omega$: 
\begin{equation}\nonumber
\mathcal{M}_2(\Om):= \Big\{\mu \in \mathcal{M}_{+}(\Om): \int_{\Om} d^2(x,\p \Om) d\mu(x)<\infty   \Big\}.
\end{equation}
For $\mu, \nu \in \mathcal{M}_2(\Om)$, the distance $Wb_2(\mu,\nu)$ is defined as
\begin{equation}\label{metric}
Wb_2^2(\mu,\nu):= \inf_{\gam \in ADM(\mu,\nu)} \int_{ \OM \times \OM} |x-y|^2 d\gam(x,y),
\end{equation}
where the set of admissible couplings $ADM(\mu, \nu)$ is a collection of positive measures $\gam$ on $\overline{\Om} \times \overline{\Om}$ satisfying:
$$
(\pi^1_{\#} \gam) |_{\Om}=\mu, \quad (\pi^2_{\#} \gam) |_{\Om}=\nu,
$$
where $\pi^i$, $i=1,2$, denote the projections of $\OM \times \OM$ onto its components. Various properties regarding $Wb_2$ metric are recorded in Section \ref{ssec:wb2}.

As an application of $Wb_2$ distance, they showed that the curve obtained by the minimizing movement scheme for the Boltzmann entropy functional
\begin{equation}\label{eq:Bol}
\calH(\rho) =  \int_{\Om} [\rho \log \rho - \rho +1 ] dx
\end{equation}
converges to the solution of the heat equation with the Dirichlet boundary condition 
$$
\rho|_{\p \Om} = 1.
$$

\subsection{Main results}
We study the following PME with Dirichlet boundary conditions,
interpreting \eqref{PME} as a gradient flow formulated via the minimizing movement scheme with the $Wb_2$ distance:
\begin{equation}\label{PME}
  \begin{cases}
                                   \p_t \rho= \Delta \rho ^{\al} & \text{in $\Om \times (0,\infty)$,} \\
                                   \rho(x,0)=\rho_0(x)  & \text{in $\Om$,} \\
  \rho(x,t)=\lam & \text{on $\p \Om \times (0,\infty)$.}
  \end{cases}
\end{equation}
Here, $\al>1$ and $\lambda \ge 0$ are given constants, and $\Om$ is a bounded domain in $\R^d$ with Lipschitz boundary.

\subsubsection*{Overview and ideas}
To set up the minimizing movement scheme, the first step is to find a suitable entropy functional such that the limit obtained through the scheme can solve the equation $\eqref{PME}$. To this end, we slightly adjust the entropy $\int \rho^\alpha/(\alpha-1) dx$.
For a given $\mu \in \mathcal{M}_2(\Om)$, we define the associated internal energy functional $\calE(\mu) = \calE_{\al,\lambda}(\mu)$ as follows:
\begin{equation*}
  {\calE}(\mu):=
  \begin{cases}                         \displaystyle{\int_{\Om}}  
 U(\rho) dx& \text{if $\mu=\rho  \mathcal{L}^d |_{\Om}$}, \\
                                   \infty & \text{else,} 
  \end{cases}
\end{equation*}
where
\[
U(\rho):=  U_{\al,\lam}(\rho)= \frac{\rho^{\al} - \al \lam^{\al-1}\rho}{\al-1}   + \lam^\al. 
\]
We will omit the subscripts $\al, \lam$ for brevity. Note that $\calE(\mu) \geq 0$ and the integrand $U(\rho)$ attains its minimum at $\rho=\lam$.
We remark that, for the heat equation, the integrand in \eqref{eq:Bol} can be tilted to $\rho \log \rho -(\log \lam +1)\rho$ to impose Dirichlet boundary condition $\rho|_{\p \Om} = \lam$; however, this approach is restricted to strictly positive case $\lam>0$.

Let us introduce the minimizing movement scheme: for a given $\mu_0 \in \mathcal{M}_2(\Om)$ with ${\calE}(\mu_0)<\infty$, and for each time step $\T>0$, we define recursively  
\begin{equation}\label{argmin}
\mu_k^\T \in \argmin_{\mu \in \mathcal{M}_2(\Om)} \Big\{ {\calE}(\mu ) + \frac{Wb_2^2(\mu_{k-1}^\T, \mu)}{2\T} \Big\}, \quad \mu_0^\T:=\mu_0, \quad \textrm{for} ~ k=1,2,....
\end{equation}
The variational problem \eqref{argmin} has a unique solution (see Lemma \ref{minimizer}), and thus the following \textit{discrete solution} is well-defined:

\noindent
\begin{equation}\label{DS}
\mu^\T(t):=\mu_n^\T \quad \textrm{for} \quad t \in ( (n-1) \T, n\T].
\end{equation}

We highlight one of the major differences between the heat equation and the PME. Unlike the heat equation, which has an infinite propagation speed, the solutions to the PME propagate at a finite speed. 
 In particular, the solution to the PME \eqref{PME} may vanish near the boundary for a short time if initial data vanish on a subset of $\Om$. 
 This behavior poses a significant challenge in analyzing the trace of the discrete solution $\mu^\tau$. Indeed, perturbative arguments used to determine the trace of $\mu^\T$ typically require the strict positivity of the discrete solution, especially when $\lambda > 0$. However, this condition may fail in the PME due to its degeneracy.
In Lemma \ref{bdry_diffusion}, we prove the positivity of the discrete solution $\mu^\tau$ near the boundary by carefully exploiting the features of the $Wb_2$ metric, even without requiring the positivity of $\mu_0$ near $\p \Om$.  This enables us to obtain quantitative estimates for $\mu^\T$ near the boundary, as shown in Proposition \ref{DS_regularity}.

\subsubsection*{Statement of main result}
We now present our main result. 
Our main result concerns the construction of a weak solution of \eqref{PME} using the minimizing movement scheme with the Figalli-Gigli distance. 
Moreover, the resulting curve is shown to be a curve of maximal slope in a suitable sense.

\begin{theorem}\label{main}
Suppose that $\mu_0:=\rho_0 \mathcal{L}^d |_\Om \in \mathcal{M}_2(\Om)$ and ${\calE}(\mu_0)<\infty$. Then we have
\begin{enumerate}
\item[\emph{(i)}] 
\emph{Convergence:} For every vanishing sequence $(\T_k)_{k \in \N}$, there exists a subsequence (not relabeled) such that
for every $t \geq 0$, $\mu^{\T_k}(t)$ defined in \eqref{DS} converges in $Wb_2$ to a measure $\mu(t)=\rho(t) \mathcal{L}^d |_\Omega \in \mathcal{M}_2(\Om)$ as $k \ra \infty$. In particular,
$\mu$ is $\frac{1}{2}$-H\"{o}lder continuous with respect to $Wb_2$. Moreover, the sequence also converges in $L_{loc}^\al \big( (0,\infty); L^\al(\Om) \big)$.

\item[\emph{(ii)}]
\emph{PME equation:}
The limit curve $\mu(t)=\rho(t)\mathcal{L}^d |_\Omega$ satisfies the following weak formulation: for any $\z \in C_c^{\infty}(\Om)$ and $0 \leq t_1 \leq t_2 < \infty$,
\begin{equation}\label{weak_solution}
 \int_{\Om} \rho (t_2) \z dx  - \int_{\Om} \rho(t_1) \z dx
 =   \int_{t_1}^{t_2} \int_{\Om} \big(\rho(t)\big)^{\al}    \Delta \z dx dt.
\end{equation} 

\item[\emph{(iii)}]
\emph{Boundary condition:}
The function
\begin{equation}\label{regularity}
t \mapsto \rho^{\al- \frac{1}{2}}(t) - \lam^{\al - \frac{1}{2}}   \quad
\textrm{belongs to}  \quad
L^2_{loc}([0,\infty); H_0^1(\Om)).
\end{equation}

\item[\emph{(iv)}] 
\emph{Curve of maximal slope:}
The curve $t \mapsto \mu(t)$ satisfies Definition \ref{def:CMS} with respect to the upper gradient
$g=|\na \calE^-(\mu)|$, where $|\na \calE^-(\mu)|$ is the relaxed slope of $\calE$ at $\mu$ defined in \eqref{def_of_relaxed_slope}.
\end{enumerate}
\end{theorem}

\begin{remark}\label{CMS_news}
We leave a remark concerning (iv) in Theorem \ref{main}. Recently, Erbar and Meglioli \cite{EM24}  proved the equivalence between any weak solution of \eqref{PME} and a curve of maximal slope with respect to the relaxed slope.  
Moreover, despite the lack of geodesic convexity for the energy $\calE$ (see \cite[Remark 3.4]{FG}),
 the authors in \cite{EM24} characterized a strong upper gradient of $\calE$ by establishing a dynamic characterization of the Figalli–Gigli distance in the spirit of the Benamou-Brenier formula. 
\end{remark}

We leave comments on related results in the literature. 
While the heat equation with homogeneous boundary was not considered in \cite{PS}, Profeta and Sturm interpreted the heat flow with homogeneous Dirichlet boundary conditions as a gradient flow, introducing the concept of a charged probability measure and defining a Wasserstein-type metric between charged probability measures. 

Finally, we refer to the recent work of Quattrocchi \cite{Q24}.
The author develops the variational structure for the linear Fokker-Planck equation with general Dirichlet boundary conditions, extending the results of \cite{FG} and \cite{M18}. The author shows that the solutions can be obtained via a modified JKO scheme for a relative entropy, using measure data over $\OM$ and replacing $Wb_2$ with a transport cost functional which is not a true metric.
In the one-dimensional setting with constant boundaries, it is further proved that the solution corresponds to the curve of maximal slope within an appropriate space.

\subsubsection*{Plan of the paper}
In Section \ref{sec:pre}, we review the metric setting $(\calM_2(\Om),Wb_2)$ and related concepts, along with various analytical tools such as compactness and trace operators that will be used later.
Section \ref{sec:lemmas} collects several auxiliary lemmas that will be used in the proofs of the main results.
Section \ref{sec:bdry} focuses on the boundary analysis of discrete solutions.
Section \ref{sec:thm1} is devoted to the proof of Theorem \ref{main}.
In Appendices, we briefly discuss how our arguments can be extended to the drift-diffusion case. Moreover, we address the pointwise lower bounds of the metric derivative.

\vspace{3mm}
\section{Preliminaries}\label{sec:pre}

This section records some notions and properties that will be used throughout this paper.

\subsection{The space $(\calM_2(\Omega), Wb_2)$} \label{ssec:wb2}
Let us recall some metric properties regarding $(\mathcal{M}_2(\Om) , Wb_2)$:

\begin{proposition} \cite[Theorem 2.2, Proposition 2.7 and 2.9]{FG}

\begin{enumerate}
\item[\emph{(i)}]
The metric space $(\mathcal{M}_2(\Om), Wb_2)$ is a Polish space, and it is always a geodesic space (while $(\mathcal{P}(\Om),W_2)$ is geodesic if and only if $\Om$ is convex). 

\item[\emph{(ii)}]
The subset $\mathcal{M}_{\leq M}(\Om)$ of $\mathcal{M}_2(\Om)$ consisting of measures with mass less or equal to $M \in \R$ is compact. 

\item[\emph{(iii)}]
The convergence in $\mathcal{M}_2(\Om)$ can be understood in the following sense:
$Wb_2(\mu_n, \mu) \ra 0$ if and only if 
\begin{equation}\label{Wb2_iff}
\int_\Om \phi ~ d\mu_n \ra \int_\Om \phi ~d\mu,~ \forall \phi \in C_c(\Om) 
\quad \textrm{and} \quad \int_\Om d^2(x,\p \Om) d\mu_n(x) \ra \int_\Om d^2(x,\p \Om) d\mu(x).
\end{equation}

\item[\emph{(iv)}]
$Wb_2$ is lower semicontinuous with respect to weak convergence in duality with functions in $C_c(\Om)$.

\end{enumerate}
\end{proposition}

\begin{remark}\label{no_need_bdry^2}
Let $\mu, \nu \in \mathcal{M}_2(\Om)$ and let $\gam \in ADM(\mu,\nu)$. Then
$$
\gam - \gam|_{\p \Om \times \p \Om} \in ADM(\mu,\nu) \quad \textrm{and} \quad \mathcal{C}(\gam - \gam|_{\p \Om \times \p \Om}) \leq \mathcal{C}(\gam).
$$
In the problem of finding optimal plans, therefore we can assume that $\gam|_{\p \Om \times \p \Om} = 0$ without any restriction on $\gam \in OPT(\mu, \nu)$. 
Also, it is easy to check that if  $A \subset \Om$ is a Borel set with dist$(A, \p \Om) > 0$, then $\mu(A)$ is finite. Hence we have $\gam \big(( A \times \OM) \cup (\OM \times A)  \big)\leq \gam(A \times \OM) + \gam(\OM \times A) = \mu(A) + \nu(A)<\infty$.
These observations imply that the infimum in the right-hand side of \eqref{metric} can be attained by using the weak compactness of $\mathcal{A}:=\{ \gam : \gam \in  ADM(\mu, \nu)  \textrm{ satisfying } \gam|_{\p \Om \times \p \Om} = 0 \}$ in duality with functions in $C_c(\OM \times \OM \backslash \p \Om \times \p \Om)$ and the weak lower semicontinuity of $\gam \mapsto \mathcal{C}(\gam)$ (in fact, if $\mu$ and $\nu$ have finite mass, the weak compactness argument is rather simple: consider a minimizing sequence $(\gam_n)$ for \eqref{metric}.  Then the weak compactness of $(\gam_n)$  follows  directly from the fact that $\gam_n(\OM \times \OM)\leq \gam_n(\Om \times \OM) + \gam_n(\OM \times \Om) = \mu(\Om) + \nu(\Om)<\infty$. In this case, we have  a subsequence $(\gam_{n_k})$ weakly converging to $\gam^*$, i.e.,  $\int \phi d \gam_{n_k} \ra \int \phi d \gam^*$, $\forall \phi \in C(\OM \times \OM)$).
\end{remark}

\begin{remark}
It is worth mentioning the difference between  $(\mathcal{M}_2(\Om), Wb_2)$ and $(\mathcal{P}_2(\Om), W_2)$. It is easy to see that for two positive measures $\mu$ and $\nu$ with $\mu(\Om) = \nu(\Om)$, $Wb_2 (\mu, \nu) \leq W_2(\mu, \nu)$.
Note also that when we treat the metric $Wb_2$, $\p \Om$ plays the role of an infinite reserve whereas $\p \Om$ does not directly affect the distance $W_2$ between probability measures on $\Om$. More precisely, when finding the optimal plan between two measures with different masses, the part lacking mass is supplied from the boundary in terms of mass transport, and the excess part is sent to the boundary.
\end{remark}

\begin{example}
We consider 
\[
\mu:=\frac{1}{2}\del_{\ep} + \frac{1}{2}\del_{1-\ep},\quad \nu:=\frac{1}{2}\del_{\ep} + \frac{1}{4}\del_{1-\ep}
\]
with small $\ep>0$. These two measures with different masses $\mu, \nu$ belong to $\mathcal{M}_2\big( (0,1) \big)$ and one can find an optimal plan $\gam$ as follows: from the point of view of minimizing the cost, the mass $1/2$ of $\mu$ located at $\ep$ does not need to be moved to any point except $\ep$. Similarly, 1/4 mass of $\mu$ located at $1-\ep$ does not need to be moved at any point except $1-\ep$, and finally, the remaining 1/4 of the mass at position $1-\ep$ is sent to the nearest boundary $1$. This describes an optimal plan $\gam$. 
Even if the masses are the same, this phenomenon continues: we define
\[
\mu^*:=\del_{\ep},\quad \nu^* := \del_{1-\ep}
\]
with a small $\ep>0$. Obviously,  $\mu^*, \nu^* \in \mathcal{M}_2\big( (0,1) \big)$. Similar to the previous example, an optimal plan is $\gam^*=\del_\ep \otimes \del_0  + \del_1 \otimes \del_{1-\ep}$, which is significantly different from the optimal plan in $W_2$, namely $\del_\ep \otimes \del_{1-\ep}$. See \cite{FG} for more details of such a metric. 
\end{example}

\subsection{Metric Derivatives and  Slopes}
We begin by recalling the concept of a curve of maximal slope, which has been extensively studied in general metric spaces; see \cite{AGS} and references
therein.
In the specific context of $(\mathcal{P}_2(\mathbb{R}^d), W_2)$, this framework is closely related to the minimizing movement scheme, particularly when the energy functional satisfies certain regularity and convexity conditions (see, e.g., \cite[Chapter~11]{AGS}).
For broader applications of this theory to various evolution equations, we refer the reader to \cite{AGS, CGW}.
Adopting the approach from \cite{AGS}, we will now introduce the definition of a curve of maximal slope for the functional~$\mathcal{E}$ on the space $(\mathcal{M}_2(\Omega), Wb_2)$.

We start with the notions of absolutely continuous curves in $\mathcal{M}_2(\Om)$ and the metric derivative of a curve.
\begin{definition}\label{def:md}
Let $\mu : [a,b] \mapsto \mathcal{M}_2(\Om)$ be a curve and let $Wb_2$ be a variant of the Wasserstein metric defined in \eqref{metric}.
We say that $\mu$ belongs to $AC^2([a,b], \mathcal{M}_2(\Om))$ if there exists $m \in L^2([a,b])$ such that 
$$
Wb_2(\mu(s), \mu(t)) \leq \int_s^t m(r)dr 	\quad \forall a\leq s \leq t \leq b.
$$
Given $t\in [a,b]$, we denote
\begin{equation}\label{def_metric_derivative}
|\mu'|(t) := \lim_{s \ra t} \frac{Wb_2(\mu(s), \mu(t))}{|s-t|}
\end{equation}
if the limit exists, and $|\mu'|$ is called the \emph{metric derivative} of $\mu$. 
\end{definition}

\begin{remark}
We note that the metric derivative of $\mu$ exists for $\mathcal{L}^1$-a.e. if a curve $\mu : [a,b] \mapsto \mathcal{M}_2(\Om)$ belongs to $AC^2([a,b], \mathcal{M}_2(\Om))$. 
\end{remark}

We then recall the slope and relaxed slope of an energy functional as formulated in \cite{AGS}, and specify these notions for the $Wb_2$ metric in the context of the current work.

\begin{definition}
We define the
\textit{slope of $\calE$} at $\mu \in \mathcal{M}_2(\Om)$, denoted by $|\na \calE|(\mu)$:
\begin{equation}\label{def_of_slope}
|\na \calE|(\mu):=\limsup_{\nu \ra \mu} \frac{\Big({\calE}(\mu) -\calE(\nu) \Big)^+}{Wb_2(\mu,\nu)},
\end{equation}
where $Wb_2$ denotes the metric in \eqref{metric}.
In addition, the \textit{relaxed slope} $|\na^{-} \calE|(\mu)$ is defined as follows:
\begin{equation}\label{def_of_relaxed_slope}
|\na^{-} \calE|(\mu):= \inf \big\{ 
\liminf_n |\na \calE(\mu_n)|: \mu_n \wra \mu \textrm{ vaguely, }
\sup_n \{ Wb_2(\mu_n,\mu), \calE(\mu_n)  \}    < \infty
\big\}
\end{equation}	
where the vague convergence $\mu_n \rightharpoonup \mu$ means that $\int \varphi \, d\mu_n \to \int \varphi \, d\mu$ for all test functions $\varphi \in C_c(\Omega)$.
\end{definition}

\vspace{3mm}
Following \cite[Definition 1.3.2]{AGS}, we introduce the definition of a curve of maximal slope, specified here with respect to the metric space $(\calM_2(\Om), Wb_2)$.
\begin{definition}\label{def:CMS}
A locally absolutely continuous map $[0,\infty) \ni t  \mapsto \mu(t) \in \mathcal{M}_2(\Om)$ is called a \textit{curve of maximal slope} for the functional $\calE$  with respect to its upper gradient $g:\calM_2(\Om) \ra [0,\infty]$,
if there exists a non-increasing map $\vphi$ such that $\vphi(t)=\calE(\mu(t))$ up to a $\mathcal{L}^1$-negligible set and
\begin{equation*}
\vphi'(t) \leq -\frac{1}{2} |\mu'(t)|^2 - \frac{1}{2} g^2(\mu(t)), \quad \mathcal{L}^1\textrm{-}a.e. ~t \in [0,\infty),
\end{equation*}
where $|\mu'|$ is defined in \eqref{def_metric_derivative}.
\end{definition}

Note that throughout this paper, our main interest in Definition \ref{def:CMS}  lies in the case where the upper gradient $g$ coincides with the relaxed slope.

\subsection{Analytical tools}
We recall Theorem 4.9 in \cite{FM}, which is equivalent to the generalized Aubin-Lions lemma proved in \cite[Theorem 2]{RS}.
\begin{proposition}\label{ALRS}
On a Banach space $X$, let be given
\begin{enumerate}[label=\textbullet]
\item a normal coercive integrand $\mathscr{F}: X \ra [0,\infty]$, i.e.,  $\mathscr{F}$ is lower semicontinuous, and its sublevels are relatively compact in $X$;

\item a pseudo-distance $g: X \times X \rightarrow [0,\infty]$, i.e., $g$ is lower semicontinuous, and $g(\rho, \eta)=0$ for any $\rho , \eta \in X$ with $\mathscr{F}(\rho) <\infty$, $\mathscr{F}(\eta) <\infty$ implies $\rho=\eta$.
\end{enumerate}
Let further $U$ be a set of measurable functions $u : (0,T) \ra X$ with a fixed $T>0$. Under the hypotheses that 
\begin{equation}\label{time_H^1}
\sup_{u\in U} \int_0^T \mathscr{F}(u(t)) dt <\infty
\end{equation}
and
\begin{equation}\label{AL_limsup}
\lim_{h \da 0} \sup_{u \in U} \int_0^{T-h} g(u(t+h) , u(t))dt =0,
\end{equation}
U contains an infinite sequence $(u_n)_{n \in \N}$ that converges in measure (with respect to $t \in (0,T)$) to a limit $u^*:(0,T) \ra X$. 
\end{proposition}

\begin{remark}
At the conclusion of the above proposition, the convergence in measure means that
$$
\lim_{n \ra \infty} 
\Big|  \{t \in (0,T) : 
|| u_n(t) - u^* (t)   ||_{X} \geq \ep   \}  \Big| =0 \quad \forall \ep>0.
$$
\end{remark}

Finally, we consider the classical trace operator $\mathcal{T}$.  Let $1 \leq p < \infty$ and suppose that $\Om$ is a bounded domain with Lipschitz boundary. Then for all $\Psi \in C^1(\R^d, \R^d)$ and $f \in W^{1,p}(\Om)$, the integration by parts in Sobolev space \cite[Theorem 4.6]{EG} is given as
\begin{equation}\label{byparts}
\int_{\Om} f~ \na \cdot \Psi dx = -\int_{\Om} \na f \cdot \Psi dx + \int_{\p \Om} (\Psi \cdot \nu) \mathcal{T}(f) d\mathcal{H}^{d-1}.
\end{equation}
Note that $\mathcal{T}$ has the following local properties  \cite[Theorem 5.7]{EG}: for $\mathcal{H}^{d-1}$-a.e. point $x \in \p \Om$,
\begin{equation*}
    \lim_{r \ra 0} \dashint_{B(x,r) \cap \Om} |f - \mathcal{T}f(x) | dy=0 	\quad and 
\quad \mathcal{T}f(x) = \lim_{r \ra 0} \dashint_{B(x,r) \cap \Om} f dy.
\end{equation*}

\vspace{3mm}

\section{Auxiliary Lemmas}\label{sec:lemmas}
\subsection{Well-definedness of JKO scheme}
We begin with the following lemma, which is obtained by estimating the energy functional $\calE$. Although the proof is simple, it is used frequently throughout this paper.
\begin{lemma}\label{young}
Let $\mu = \rho \mathcal{L}^d |_{\Om} \in \mathcal{M}_2(\Om)$ with  ${\calE}(\mu)< \infty$. Then there exists a positive constant $m_\infty$ depending on $\al, \lam, \Om, {\calE}(\mu)$ such that
\begin{equation}\label{bound2}
\int_{\Om} \rho^\al dx \leq m_\infty \quad \textrm{and} \quad \quad \int_{\Om} \rho ~dx \leq m_\infty.
\end{equation}
\end{lemma}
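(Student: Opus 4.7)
The plan is to extract the bounds directly from the definition of $\U$ by recognizing that $\U$ already encodes a Young-type inequality.

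First, I would unpack $E(\mu)=\int_\Om \U(\rho)\,dx$ using the explicit form of $\U$ and multiply through by $\al-1>0$ to obtain the identity
\begin{equation*}
(\al-1) E(\mu) = \int_\Om \rho^\al\,dx - \al\lam^{\al-1}\int_\Om \rho\,dx + (\al-1)\lam^\al |\Om|.
\end{equation*}
The first inequality in \eqref{bound2} then reduces to controlling the middle term on the right by the first one plus a constant depending only on $\al,\lam,\Om$.

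Next, I would apply Young's inequality with conjugate exponents $p=\al$ and $q=\al/(\al-1)$ to $\rho\cdot \al\lam^{\al-1}$, yielding
\begin{equation*}
\al\lam^{\al-1}\rho \leq \frac{\rho^\al}{\al} + \frac{\al-1}{\al}\,\al^{\al/(\al-1)}\lam^\al,
\end{equation*}
and integrate over $\Om$. Substituting into the identity above and absorbing $\tfrac{1}{\al}\int_\Om \rho^\al dx$ on the left-hand side gives
\begin{equation*}
\Bigl(1-\tfrac{1}{\al}\Bigr)\int_\Om \rho^\al\,dx \leq (\al-1) E(\mu) + C(\al)\lam^\al |\Om|,
\end{equation*}
whence $\int_\Om \rho^\al dx \leq m_\infty^{(1)}$ for a constant $m_\infty^{(1)}$ depending only on $\al,\lam,\Om,E(\mu)$. (Equivalently, one can argue that the nonnegativity of $\U$ itself already provides the pointwise inequality $\al\lam^{\al-1}\rho\leq \rho^\al + (\al-1)\lam^\al$, which leads to the same conclusion; I would prefer the Young's formulation for transparency.)

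Finally, for the second bound, I would apply H\"older's inequality with exponents $\al$ and $\al/(\al-1)$:
\begin{equation*}
\int_\Om \rho\,dx \leq |\Om|^{(\al-1)/\al}\Bigl(\int_\Om \rho^\al\,dx\Bigr)^{1/\al} \leq |\Om|^{(\al-1)/\al}\bigl(m_\infty^{(1)}\bigr)^{1/\al},
\end{equation*}
and take $m_\infty$ to be the maximum of this quantity and $m_\infty^{(1)}$. There is no real obstacle here; the only mild care is to keep track of the fact that Young's constant depends on $\al$ and $\lam$, so the final $m_\infty$ inherits a dependence on $\al, \lam, |\Om|$, and the assumed finite value $E(\mu)$, exactly as claimed.
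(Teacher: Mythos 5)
Your main argument is correct and follows essentially the same route as the paper: both start from the identity $\int_\Om\rho^\al\,dx=(\al-1)\bigl(E(\mu)-|\Om|\lam^\al\bigr)+\al\lam^{\al-1}\int_\Om\rho\,dx$, bound the linear term by Young's inequality with a small coefficient in front of $\rho^\al$, absorb that term, and then obtain the $L^1$ bound by H\"older. One caveat: your parenthetical alternative is \emph{not} equivalent. The pointwise inequality $\al\lam^{\al-1}\rho\leq\rho^\al+(\al-1)\lam^\al$ is precisely the statement $(\al-1)\,\U(\rho)\geq 0$, so substituting it back into the identity merely reproduces $E(\mu)\geq 0$ and gives no control whatsoever on $\int_\Om\rho^\al\,dx$. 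The strict weight $1/\al<1$ multiplying $\rho^\al$ coming from Young's inequality is what makes the absorption possible, so keep the Young's formulation and drop the parenthetical claim.
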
 
\begin{proof}
By the definition of $\calE$, we have $\int_\Om \rho^\al =(\al-1)({\calE}(\mu) - |\Om| \lam^\al) + \al \lam^{\al-1} \int_\Om \rho$. Using Young's inequality, we get $\int_\Om \rho \leq \ep \int_\Om \rho^\al + C(\ep, \al, \Om)$. By choosing  $\ep>0$ sufficiently small, we see the first inequality in \eqref{bound2} holds. The second inequality is then obtained by H\"{o}lder's inequality.
\end{proof}

The following lemma ensures that the scheme introduced in \eqref{argmin} is well defined. 

\begin{lemma}\label{minimizer}
Let $\mu_0 \in \mathcal{M}_2(\Om)$ with ${\calE}(\mu_0)<\infty$ and $\T>0$.  
Then there exists a unique minimizer $\mu_k^\T$  for each step in \eqref{argmin}.       
\end{lemma}

\begin{proof}
Since the energy $\calE$ is decreasing for each step from \eqref{bound1}, it is sufficient to show the existence and the uniqueness of the minimizer of the first step.

We first observe that $\calE$ is lower semicontinuous for $Wb_2$  convergence. Indeed, let $(\mu_n) \subset \mathcal{M}_2(\Om)$ with $Wb_2(\mu_n, \mu)  \ra 0$. Without loss of generality, we can assume $\liminf {\calE}(\mu_n)<\infty$, which enables us to find a subsequence $(\mu_{n'})$  such that 
$$
\liminf {\calE}(\mu_n) = \lim {\calE}(\mu_{n'}) \quad 
and \quad  \sup {\calE}(\mu_{n'})<\infty.
$$
Using Lemma \ref{young} and weak compactness of $L^\al$, there exists a subsequence $(\mu_{n''}) = (\rho_{n''} \mathcal{L}^d |_\Om) \subset \mathcal{M}_2(\Om)$ such that  $\rho_{n''} \wra \rho_*$ in $L^\al(\Om)$,  which implies by \eqref{Wb2_iff}
\begin{equation}\label{Wb2_convergence}
Wb_2(\mu_{n''}, \mu_*) \ra 0,
\end{equation}
where $\mu_*:=\rho_* \mathcal{L}^d |_{\Om} \in \mathcal{M}_2(\Om)$. Hence, $\mu = \mu_*$.
Due to the fact that for any $x,y \geq 0$
$$
U(x) - U(y) \geq \frac{\al}{\al-1}(x-y) (y^{\al-1} - \lam^{\al-1}), 
$$
we conclude that
\begin{equation}\nonumber
\liminf_{n \ra \infty} {\calE}(\mu_{n})  = \lim_{n'' \ra \infty} {\calE}(\mu_{n''}) \geq {\calE}(\mu).
\end{equation}

Next, we prove the existence of a minimizer. Let $\mathcal{F}(\nu):=\calE(\nu ) + Wb_2^2(\mu_0, \nu)/2\T$. Clearly, $\mathcal{F} \geq0$. Since $\mathcal{F}$ is bounded below, there exists a minimizing sequence $(\nu_n) \subset \mathcal{M}_2(\Om)$ satisfying 
$$\textrm{inf}_{\mu \in \mathcal{M}_{2}(\Om)} \mathcal{F}(\mu)=\lim_{n\ra \infty} \mathcal{F}(\nu_n).$$
Using that $\textrm{inf}_{\mu \in \mathcal{M}_{2}(\Om)} \mathcal{F}(\mu)\leq \mathcal{F}(\mu_0) = {\calE}(\mu_0)$, we deduce that
$\nu_n=f_n \mathcal{L}^d |_{\Om}$ and $(f_n)$ is uniformly bounded in ${L^\al(\Om)}$ by Lemma \ref{young}.  Similarly as in \eqref{Wb2_convergence}, it follows that $Wb_2(\nu_{n_k}, \nu_{*}) \ra 0$ for a subsequence $(\nu_{n_k})$ of $(\nu_n)$.
Together with the lower semicontinuity of $\calE$, we obtain
$$
\textrm{inf}_{\mu \in \mathcal{M}_{2}(\Om)} \mathcal{F}(\mu)= \liminf_{k \ra \infty} \mathcal{F}(\nu_{n_k}) \geq 
\liminf_{k \ra \infty} \calE(\nu_{n_k}) + \liminf_{k \ra \infty} Wb_2^2(\mu_0, \nu_{n_k})/2\T \geq
\mathcal{F}(\nu_*).
$$
Therefore $\nu_*$ is a minimizer, namely $\mu_1^\T$.

Lastly, we prove that the minimizer is unique. Suppose $\mu^\T$ and $\tilde{\mu}^\T$ are two minimizers.
Define $\sigma_t^\T=(1-t)\mu^\T + t \tilde{\mu}^\T$, $0\leq t \leq 1$. Let us observe that the term $Wb_2^2(\mu_0, \cdot)$ is convex, namely:
\begin{equation}\label{min_ineq1}
Wb_2^2(\mu_0,\sigma_t^\T) \leq (1-t)Wb_2^2(\mu_0 ,\mu^\T) + tWb_2^2(\mu_0, \tilde{\mu}^\T).
\end{equation}
Indeed, we consider two optimal plans $\gam^\T \in OPT(\mu_0, \mu^\T)$ and $\tilde{\gam}^\T \in OPT(\mu_0, \tilde{\mu}^\T)$, and define $\gam_t^\T:= (1-t)\gam^\T +t \tilde{\gam}^\T \in ADM(\mu_0, \sigma_t^\T)$. 
Since $Wb_2^2(\mu_0, \sigma_t^\T) \leq \int |x-y|^2 d\gam_t^\T$, \eqref{min_ineq1} holds. 
In addition, using the minimality of $\mu^\T$ and \eqref{min_ineq1}, we have
$$
{\calE}(\mu^\T) + \frac{Wb_2^2(\mu_0, \mu^\T)}{2\T} \leq \calE(\sigma_t^\T) + \frac{Wb_2^2(\mu_0, \sigma_t^\T)}{2\T}
$$
\begin{equation}\nonumber
\leq  \calE(\sigma_t^\T) + \frac{(1-t)Wb_2^2(\mu_0 ,\mu^\T) + tWb_2^2(\mu_0, \tilde{\mu}^\T)}{2\T}.
\end{equation}
This implies
\begin{equation}\nonumber
\lim_{t \ra 0+} \frac{{\calE}(\mu^\T)  - \calE(\sigma_t^\T)}{t} =\int_{\Om} U'(\rho^\T)( \rho^\T - \tilde{\rho}^\T) dx \leq \frac{Wb_2^2(\mu_0, \tilde{\mu}^\T) - Wb_2^2(\mu_0, \mu^\T) }{2\T}.
\end{equation}
Similarly, the minimality of $\tilde{\mu}^\T$ and \eqref{min_ineq1} yield
\begin{equation}\nonumber
\int_{\Om} U'(\tilde{\rho}^\T)( \tilde{\rho}^\T - \rho^\T) dx \leq \frac{Wb_2^2(\mu_0, \mu^\T) - Wb_2^2(\mu_0, \tilde{\mu}^\T) }{2\T}.
\end{equation}
By adding the last two inequalities, we get
\begin{equation}\label{give_uniqueness}
\int_{\Om}(\tilde{\rho}^\T - \rho^\T)    \big(U'(\tilde{\rho}^\T)-  U'(\rho^\T) \big)dx \leq 0.
\end{equation}
The convexity of $U$ and \eqref{give_uniqueness}
imply $\mu^\T= \tilde{\mu}^\T$.
\end{proof}

\subsection{Euler--Lagrange equation}

The next proposition proven in \cite{FG}
 is needed to prove Lemma \ref{EL}.  
\begin{proposition}\cite[Proposition 2.11]{FG}\label{perturb1}
Let $\mu,\nu \in \mathcal{M}_2(\Om)$ and $\Phi : \Om \ra \R^d$ a bounded vector field which is compactly supported. Also, let $\gam \in OPT(\mu,\nu)$, and define $\mu_t:=(id+t \Phi)_{\#}\mu$. Then
$$
\limsup_{t \ra 0} \frac{Wb_2^2(\mu_t, \nu) - Wb_2^2(\mu,\nu)}{t} \leq -2 \int_{\OM \times \OM} \lag \Phi(x), y-x \rag d\gam(x,y).
$$
\end{proposition}
Then we get the following Euler-Lagrange equation for $\rho^\T$:
\begin{lemma}\label{EL}
Let $\mu^\T$ be a minimizer in \eqref{argmin}. If  $\gam \in OPT(\mu^\T, \mu_0)$ and $\Phi \in C_c^1(\Om, \R^d)$, then $\mu^\T = \rho^\T \mathcal{L}^d|_{\Om}$ and
\begin{equation}\nonumber
\int_{\OM \times \OM} \lag  \Phi(x), y-x \rag d\gam(x,y) + \T \int_{\Om} (\rho^{\T})^\al \emph{div} \Phi(x) dx = 0.
\end{equation}
\end{lemma}
\begin{proof}
For sufficiently small $t>0$, let us define $\mu_t^{\T}:= (id+t\Phi)_{\#}\mu^\T$. By minimality of $\mu^\T$, we have
$$
\frac{Wb_2^2(\mu^\T,\mu_0)}{2\T} + {\calE}(\mu^\T) \leq \frac{Wb_2^2(\mu_t^{\T},\mu_0)}{2\T} + {\calE}(\mu_t^{\T}).
$$
Then, by Proposition \ref{perturb1} (letting $\nu=\mu_0)$ and Proposition \ref{perturb2}, we obtain
\begin{equation}\nonumber
0 \leq -\frac{1}{\T} \int_{\OM \times \OM} \lag \Phi(x), y-x \rag d\gam(x,y) - \int_{\Om} (\rho^{\T})^\al \textrm{div} \Phi(x) dx.
\end{equation}
Replacing $\Phi$ with $-\Phi$, we get the desired result.
\end{proof}

\subsection{Fundamental estimates}  
Before proceeding to the main arguments, we first recall the following well-known and useful inequalities.

Let $\mu_0 \in \mathcal{M}_2(\Om)$ with ${\calE}(\mu_0)<\infty$. Due to the definition of the energy $\calE$, we know that $\mu^{\T} = \rho^{\T} \mathcal{L}^d |_{\Om}$ whenever the minimizer in \eqref{argmin} exists. We often use $\rho$ to denote the measure $\mu=\rho \mathcal{L}^d |_{\Om}$ by abuse of notation.

It can be easily shown that
\begin{equation}\label{bound1}
\sum_{k=m}^{n-1} \frac{Wb_2^2 (\rho_k^\T, \rho_{k+1}^\T)}{2\T} \leq \calE(\rho_m^\T) - \calE(\rho_{n}^\T) \leq \calE(\rho_0) \quad \forall  m <n \in \N
\end{equation}
by an inductive argument. 
In fact, in a general metric setting, a more qualitative inequality than \eqref{bound1} was introduced in \cite[Lemma 3.2.2]{AGS}:
\begin{equation}\label{Ambrosio}
\sum_{k=m}^{n-1} \frac{Wb_2^2 (\rho_k^\T, \rho_{k+1}^\T)}{2\T} +\frac{\T}{2} \sum_{k=m}^{n-1} |\na \calE|^2(\rho_{k+1}^\T)
 \leq \calE(\rho_m^\T) - \calE(\rho_{n}^\T) \leq \calE(\rho_0)  \quad \forall  m <n \in \N,
\end{equation}
where $|\na \calE|(\mu)$ was defined in \eqref{def_of_slope}.

\subsubsection*{Uniform $L^1\cap L^\alpha$ estimates} In particular, since ${\calE}(\mu^\T (t)) \leq {\calE}(\mu_0)$, Lemma \ref{young} implies that there is a positive constant $m_\infty$, independent of $\T$, such that 
\begin{equation}\label{bound2_time}
||\rho^\T(t)||_{L^\al(\Om)}^\al \leq m_\infty, 
\quad
|| \rho^\T(t)||_{L^1(\Om)} \leq m_\infty \quad \forall t\geq0.
\end{equation}

\subsubsection*{Approximate Hölder continuity}
On the other hand, from \eqref{DS} and \eqref{bound1}, we directly obtain the approximate $\frac{1}{2}$-H\"{o}lder continuity:
\begin{equation}\label{HolderConti}
Wb_2(\rho^\T(s), \rho^\T(t)) \leq \sqrt{2\calE(\rho_0) ( |s-t| +  \T)} \quad \forall s,t \ge0.
\end{equation}

\vspace{3mm}
If $\z$ is Lipschitz and compactly supported in $\Om$, then we can estimate the difference between the integral of $\z$ against $\mu$ and the integral of $\z$ against $\nu$:
\begin{lemma}\label{munu}
Let $\mu, \nu \in \mathcal{M}_2(\Om)$. For a Lipschitz function $\z$ compactly supported in $\Om$, we have
$$
\Big|  \int_\Om \z d\mu - \int_\Om \z d\nu \Big| 
\leq Lip(\z) \sqrt{\mu(\Om) + \nu(\Om)} Wb_2(\mu,\nu).
$$
\end{lemma}
\begin{proof}
Let $\gam \in \textrm{OPT}(\mu,\nu)$. Then we see that
$$
\Big|  \int_\Om \z d\mu - \int_\Om \z d\nu \Big| = \Big| \int_{\Om \times \OM} \z(x) d\gam(x,y) - \int_{\OM \times \Om} \z(y) d\gam(x,y) \Big|
$$
$$
= \Big| \int_{\OM \times \OM} \z(x) d\gam(x,y) - \int_{\OM \times \OM} \z(y) d\gam(x,y) \Big| \leq \int_{\OM \times \OM} \big|  \z(x) -\z(y)  \big| d\gam(x,y)
$$
$$
\leq Lip(\z) \sqrt{\gam( \OM \times \OM)} Wb_2(\mu,\nu)  \leq Lip (\z) \sqrt{\mu(\Om) + \nu(\Om)} Wb_2(\mu,\nu)
$$
since $\gam (\OM \times \OM) \leq \gam(\Om \times \OM) + \gam(\OM \times \Om) = \mu(\Om) + \nu(\Om)$.
\end{proof}

In the next proposition, we evaluate the directional derivative of $\calE$ along $\Phi$:
\begin{proposition}\label{perturb2}
Suppose $\mu=\rho \mathcal{L}^d |_{\Om} \in \mathcal{M}_2(\Om)$ with ${\calE}(\mu)<\infty$, and let $\Phi \in C_c^1(\Om, \R^d)$. If $\mu_t:= (id + t \Phi)_{\#} \mu$, then
\begin{equation}\nonumber
\lim_{t \ra 0} \frac{{\calE}(\mu_t) - {\calE}(\mu)}{t}= -\int_{\Om} \rho^{\al} ~ \emph{div} \Phi dx.
\end{equation}
\end{proposition}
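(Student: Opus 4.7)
The plan is to reduce everything to a pointwise computation via a change of variables and then to justify an exchange of limit and integral by dominated convergence. Since $\Phi \in C_c^1(\Omega,\R^d)$, there exists $t_0>0$ such that for $|t|<t_0$ the map $T_t := \mathrm{id} + t\Phi$ is a $C^1$-diffeomorphism of $\Omega$ onto itself (equal to the identity outside the compact set $\mathrm{supp}\,\Phi$). Setting $J_t(x) := \det(I + t\nabla\Phi(x))$, we have uniformly on $\Omega$ the expansion
\begin{equation*}
J_t(x) = 1 + t\,\mathrm{div}\Phi(x) + O(t^2), \qquad \tfrac{1}{2} \leq J_t(x) \leq 2.
\end{equation*}

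Because $\mu_t = (T_t)_\# \mu$ is again absolutely continuous with density $\rho_t(T_t(x)) = \rho(x)/J_t(x)$, the change of variables formula and Definition \ref{energy_functional_E} yield
\begin{equation*}
E(\mu_t) - E(\mu) = \int_\Omega \Bigl[\U\!\bigl(\rho(x)/J_t(x)\bigr)J_t(x) - \U(\rho(x))\Bigr]\,dx.
\end{equation*}
A direct differentiation at $t=0$ of the integrand (using the chain rule and $\partial_t J_t|_{t=0} = \mathrm{div}\Phi$) produces, for each $x\in\Omega$,
\begin{equation*}
\frac{d}{dt}\bigg|_{t=0}\U(\rho/J_t)J_t \;=\; \bigl[\U(\rho) - \rho\,\U'(\rho)\bigr]\mathrm{div}\Phi.
\end{equation*}
With $\U(s)=(s^\al - \al\lam^{\al-1}s)/(\al-1) + \lam^\al$ one computes $\U(s) - s\U'(s) = \lam^\al - s^\al$, so the pointwise limit is $(\lam^\al - \rho^\al)\,\mathrm{div}\Phi$.

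The main step is to justify passing the limit under the integral. Using $\tfrac12 \leq J_t \leq 2$ and the $C^1$ character of $\U$ on $[0,\infty)$, one obtains a bound of the form
\begin{equation*}
\left|\frac{\U(\rho/J_t)J_t - \U(\rho)}{t}\right| \;\leq\; C\bigl(\rho^{\al} + \rho + 1\bigr)\,\|\mathrm{div}\Phi\|_{L^\infty}\,\chi_{\mathrm{supp}\,\Phi}(x),
\end{equation*}
valid uniformly for $|t|<t_0$ with $C=C(\al,\lam)$; the dominating function is integrable because $\rho\in L^\al(\Om)\cap L^1(\Om)$ by Lemma \ref{young} (and $\mathrm{supp}\,\Phi$ is compact in $\Om$). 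Dominated convergence then gives
\begin{equation*}
\lim_{t\to 0}\frac{E(\mu_t)-E(\mu)}{t} \;=\; \int_\Om (\lam^\al - \rho^\al)\,\mathrm{div}\Phi\,dx.
\end{equation*}
Finally, since $\Phi \in C_c^1(\Om,\R^d)$, the constant term integrates to zero: $\lam^\al\int_\Om \mathrm{div}\Phi\,dx = 0$ by the divergence theorem applied to a ball containing $\mathrm{supp}\,\Phi$. This leaves $-\int_\Om \rho^\al\,\mathrm{div}\Phi\,dx$, which is the claimed formula.

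The only subtle point is the dominated convergence step; all other elements (diffeomorphism property, change of variables, explicit derivative of $\U$) are routine. I expect the dominating bound to require only the polynomial growth estimate $|\U'(s)|\leq C(s^{\al-1}+1)$ combined with the two-sided bound on $J_t$, which gives control by $\rho^\al+\rho+1$ on the support of $\mathrm{div}\Phi$.
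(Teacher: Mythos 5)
Your proposal is correct and follows essentially the same route as the paper, which simply notes that $\mu_t$ has the same mass as $\mu$ and defers the rest to the classical Wasserstein-space argument of \cite[Lemma 10.4.4]{AGS} --- precisely the change-of-variables identity $E(\mu_t)=\int_\Om \U(\rho/J_t)J_t\,dx$, the pointwise computation $\U(s)-s\U'(s)=\lam^\al-s^\al$, and the dominated convergence step you carry out. Your write-up just makes those standard details explicit.
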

\begin{proof} 
For sufficiently small $t$, we know $\mu_t=\rho_t \mathcal{L}^d |_{\Om}\in \mathcal{M}_2(\Om)$ and $\int_\Om \rho_t  = \int_\Om \rho$. The rest of the proof is the same as the proof known in Wasserstein space (see, e.g., \cite[Lemma 10.4.4]{AGS}).
\end{proof}

We now turn to obtain a lower bound of $|\na \calE|$, which plays an important role when using the Aubin-Lions Lemma in section \ref{sec:thm1}, see \eqref{H^1}.

\begin{lemma}\label{slope}
Let $\mu =\rho \mathcal{L}^d |_{\Om} \in \mathcal{M}_2(\Om)$ with ${\calE}(\mu)<\infty$. If $|\na \calE|(\mu)<\infty$, then $\rho^\al \in W^{1,1}(\Om)$ and
\begin{equation}\nonumber
\int_{\Om} \frac{|\na \rho^\al |^2}{\rho} dx \leq |\na \calE |^2 (\mu).
\end{equation}
\end{lemma}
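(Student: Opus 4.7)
The plan is to use a duality argument based on horizontal perturbations by smooth compactly supported vector fields, combined with the Riesz representation theorem. Fix $\Phi \in C_c^1(\Om, \R^d)$ and set $\mu_t := (id + t\Phi)_\# \mu$. For $|t|$ small enough, $id + t\Phi$ maps $\Om$ into $\Om$, so $\mu_t \in \mathcal{M}_2(\Om)$ with the same total mass as $\mu$. The natural transport plan $(id, id+t\Phi)_\# \mu \in ADM(\mu, \mu_t)$ yields the a priori bound
\begin{equation*}
Wb_2^2(\mu, \mu_t) \leq \int_\Om |t\Phi(x)|^2 d\mu(x) = t^2 \int_\Om |\Phi|^2 \rho\, dx,
\end{equation*}
which in particular forces $\mu_t \to \mu$ in $Wb_2$ as $t \to 0$. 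Combining this with Proposition \ref{perturb2} and the definition \eqref{def_of_slope} of $|\grad E|$, applied to $\Phi$ and then to $-\Phi$ to capture the positive part, I obtain the key duality estimate
\begin{equation*}
\left| \int_\Om \rho^\al \operatorname{div} \Phi\, dx \right| \leq |\grad E|(\mu) \left( \int_\Om |\Phi|^2 \rho\, dx \right)^{1/2}, \qquad \forall \Phi \in C_c^1(\Om, \R^d).
\end{equation*}

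This estimate says that the linear functional $L(\Phi) := \int_\Om \rho^\al \operatorname{div} \Phi\, dx$ is continuous on $C_c^1(\Om, \R^d)$ for the norm of $L^2(\mu; \R^d)$. Since $\mu$ is a finite Radon measure on $\Om$ by Lemma \ref{young}, a standard mollification/truncation argument shows that $C_c^1(\Om, \R^d)$ is dense in $L^2(\mu; \R^d)$, so $L$ extends to a continuous linear functional on the Hilbert space $L^2(\mu; \R^d)$ of norm at most $|\grad E|(\mu)$. By the Riesz representation theorem, there exists $w \in L^2(\mu; \R^d)$ with $\|w\|_{L^2(\mu)} \leq |\grad E|(\mu)$ such that
\begin{equation*}
\int_\Om \rho^\al \operatorname{div} \Phi\, dx = \int_\Om w \cdot \Phi\, d\mu = \int_\Om (\rho w) \cdot \Phi\, dx \qquad \forall \Phi \in C_c^1(\Om, \R^d),
\end{equation*}
i.e.\ $\grad \rho^\al = -\rho w$ in $\mathcal{D}'(\Om)$. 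Cauchy--Schwarz plus Lemma \ref{young} gives $\int_\Om |\rho w|\, dx \leq \|w\|_{L^2(\mu)} \|\rho\|_{L^1(\Om)}^{1/2} < \infty$, so $\rho^\al \in W^{1,1}(\Om)$ with $\grad \rho^\al = -\rho w$, and the claim
\begin{equation*}
\int_\Om \frac{|\grad \rho^\al|^2}{\rho}\, dx = \int_\Om |w|^2 \rho\, dx = \|w\|_{L^2(\mu)}^2 \leq |\grad E|^2(\mu)
\end{equation*}
follows.

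The main delicate point, I expect, is the passage from the one-sided $\limsup$ in \eqref{def_of_slope} to the two-sided bound on $\int_\Om \rho^\al \operatorname{div} \Phi\, dx$: depending on the sign of $\int_\Om \rho^\al \operatorname{div} \Phi\, dx$, it is the perturbation by either $+\Phi$ or $-\Phi$ that realizes $(E(\mu)-E(\mu_t))^+ > 0$ for small $t > 0$, and only in that direction can the definition of the slope be used directly; the bound for the other sign then follows by swapping $\Phi \leftrightarrow -\Phi$. Once this bilateral estimate is in hand, the rest (density of $C_c^1$ in $L^2(\mu)$, Riesz representation, and verification that $\rho w \in L^1$) is routine.
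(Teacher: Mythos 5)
Your proof is correct and follows essentially the same route as the paper: horizontal perturbations $\mu_t=(id+t\Phi)_\#\mu$, the a priori bound $Wb_2(\mu,\mu_t)\le t\norm{\Phi}_{L^2(\mu)}$, Proposition \ref{perturb2} plus the slope definition to get the bilateral duality estimate, and finally Riesz representation in $L^2(\mu)$. The only technical divergence is at the extension step: you extend the functional $\Phi\mapsto\int_\Om\rho^\al\,\mathrm{div}\,\Phi\,dx$ from $C_c^1(\Om,\R^d)$ to $L^2(\mu;\R^d)$ by density (valid, since $\mu$ is a finite Radon measure on the open set $\Om$ by Lemma \ref{young}), whereas the paper uses the Hahn--Banach theorem together with a preliminary observation that $\rho^\al\in BV(\Om)$; both are standard and lead to the same $\bold{w}\in L^2(\mu)$, the density route being marginally more economical since it also yields uniqueness of the extension.
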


\begin{proof}
We follow the arguments of \cite[Theorem 10.4.6]{AGS} about $(\mathcal{P}_p(\R^d), W_p)$. 
Let  $\mu_t := (id+ t \Phi)_\# \mu$ with $\Phi \in C_c^1(\Om, \R^d)$. Using the definition of $|\na \calE|$, we see that
$$
|\na \calE|(\mu)
\geq  \limsup_{t \da 0} \frac{\Big({\calE}(\mu) -{\calE}(\mu_t) \Big)^+}{Wb_2(\mu,\mu_t)}
\geq \limsup_{ t \da 0} \frac{\Big({\calE}(\mu) -{\calE}(\mu_t) \Big)^+}{W_2(\mu,\mu_t)} 
$$
\begin{equation}\label{slope_lower_bound}
\geq \limsup_{ t \da 0} \frac{{\calE}(\mu) -{\calE}(\mu_t) }{t} \frac{t}{W_2(\mu,\mu_t)}
\geq \limsup_{ t \da 0} \frac{{\calE}(\mu) -{\calE}(\mu_t) }{t} \frac{1}{||\Phi||_{L^2(\mu; \Om)}},  
\end{equation}
where we used that $\mu(\Om)=\mu_t(\Om)$ for sufficiently small $t>0$ and that $W_2(\mu, \mu_t) \leq t ||\Phi||_{L^2(\mu ; \Om)}$. Using Proposition \ref{perturb2} and \eqref{slope_lower_bound},  we get
$$
\int_\Om \rho^\al  \textrm{div} \Phi dx
\leq |\na \calE|(\mu) ||\Phi||_{L^2(\mu;\Om)}.
$$
Changing $\Phi$ with $-\Phi$ gives
\begin{equation}\label{BV}
\Big|  \int_\Om \rho^\al  \textrm{div} \Phi dx \Big|
\leq |\na \calE|(\mu) ||\Phi||_{L^2(\mu;\Om)} 
\leq  \sqrt{\mu(\Om)} |\na \calE|(\mu) \sup_{\Om} |\Phi|.
\end{equation}
From Lemma \ref{young}, $\rho^\al \in L^1(\Om)$. Hence \eqref{BV} directly shows $\rho^\al \in BV(\Om)$ (see \cite[Chapter 3]{AFP} or \cite[Chapter 5]{EG}). Using Riesz's Representation theorem, we have a finite vector valued Radon measure, namely $D \rho^\al$, such that
\begin{equation}\label{BVparts}
-\int_{\Om} \rho^\al \textrm{div} \Phi dx =  \int_{\Om} \Phi  d (D\rho^\al ), \quad \forall \Phi \in C_c^1(\Om, \R^d).
\end{equation}
On the other hand, if we define
$$
\mathscr{L}(\Phi) := \int_{\Om} \Phi  d (D\rho^\al ) \quad  \textrm{and} \quad p(\Phi):= |\na \calE|(\mu) ||\Phi||_{L^2(\mu;\Om)},
$$
then $\mathscr{L}$ is a linear functional on  $C_c^1(\Om, \R^d) $ and $p$ is a sub-linear function on $L^2(\mu;\Om, \R^d)$. 

Using \eqref{BV}, \eqref{BVparts}, and Hahn-Banach theorem, there exists a linear functional 
$\widetilde{\mathscr{L}}$ such that 
$\widetilde{\mathscr{L}} =\mathscr{L}$ on $C_c^1(\Om, \R^d)$ and
$$
|\widetilde{\mathscr{L}} (\Phi)|
\leq |\na \calE|(\mu) ||\Phi||_{L^2(\mu;\Om)}, \quad  \forall \Phi \in L^2(\mu;\Om),
$$
which implies
$\widetilde{\mathscr{L}}$ is a bounded linear functional on $L^2(\mu;\Om)$. Therefore, there exists  $\bold{w} \in L^2(\mu;\Om)$ such that $||\widetilde{\mathscr{L}}||=||\bold{w}||_{L^2(\mu;\Om)} \leq |\na \calE|(\mu)$ and
\begin{equation}\label{from_riesz}
\widetilde{\mathscr{L}}(\Phi) = \int_{\Om} \Phi \cdot \bold{w} d\mu
\end{equation}
due to Riesz's representation theorem. By using Lemma \ref{young} and H\"{o}lder's inequality, we have $\int_\Om \sqrt{\rho} (\sqrt{\rho} |\bold{w}|) dx \leq m_\infty^{1/2} || \bold{w}||_{L^2(\mu;\Om)}$, and thus we have $\rho \bold{w} \in L^1(\Om)$. Observing \eqref{BVparts} and \eqref{from_riesz}, we have for all $\Phi \in C_c^{\infty}(\Om, \R^d)$
$$
\int_{\Om} \rho^\al \textrm{div}\Phi dx = -\int_{\Om} \Phi \cdot (\rho \bold{w}) dx,
$$
which yields $\rho^\al \in W^{1,1}(\Om)$, $\rho \bold{w}=\na \rho^\al$, and 
$$
\int_{\Om} \frac{|\na \rho^\al|^2}{\rho} dx \leq |\na \calE|^2(\mu).
$$
This completes the proof.
\end{proof}


\section{Analysis near the boundary}\label{sec:bdry}
In this section, we investigate the properties of the discrete solutions near the boundary. It suffices to consider the minimizer at the first step, as the same results follow at each step by identical arguments.

Let $\rho^\T$ be the minimizer determined by the first step of \eqref{argmin}. Let us define 
\begin{equation}\label{S_r_Z}
S_r:= \{ x \in \Om : d(x, \p \Om) < r \} \quad and  
\quad Z:= \{ x \in \Om:  \rho^\T(x)  =0 \}.
\end{equation}
Clearly, $S_r$ is well-defined for all $r>0$ (there exists a positive constant $r_ \Om \leq \textrm{diam}(\Om)$ such that $S_r = \Om$ if $r \geq r_\Om$).

We will prove that $\rho^\T$ cannot be zero near $\p \Om$ for $\lam>0$ in the next lemma, which will be crucially used to obtain the trace of $\rho^\T$ later.
\begin{lemma}\label{bdry_diffusion}
Set $r^*:=\displaystyle{\sqrt{\frac{2\al \lam^{\al-1}}{\al-1}\T} }$. If $\lam>0$, then
$
\mathcal{L}^d (S_r \cap Z)=0
$
for any $r \in (0,r^*)$, where $S_r$ and $Z$ are the sets defined in \eqref{S_r_Z}.
\end{lemma}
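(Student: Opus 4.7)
The plan is a contradiction argument via a perturbative competitor. The guiding observation is that $\U'(0) = -\al\lam^{\al-1}/(\al-1) < 0$ when $\lam > 0$, so placing a small amount of mass on any region where $\rho^\T$ vanishes strictly decreases $E$ to first order, and this gain can beat the $Wb_2^2$ cost of importing that mass from the boundary provided the region lies close enough to $\p\Om$. Concretely, I would assume for contradiction that $\mathcal{L}^d(A) > 0$, where $A := S_r \cap Z$ for some $r \in (0, r^*)$, and for small $\ep > 0$ consider the competitor $\tilde{\mu}^\T := \tilde{\rho}^\T \mathcal{L}^d|_\Om$ with $\tilde{\rho}^\T := \rho^\T + \ep \chi_A$. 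Since $A \subset S_r$ has $d(\cdot,\p\Om) < r$ and $\rho^\T$ vanishes on $A$, one immediately checks $\tilde{\mu}^\T \in \mathcal{M}_2(\Om)$.

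Because $\rho^\T \equiv 0$ on $A$, the energy increment is the pointwise computation
$$
E(\tilde{\mu}^\T) - E(\mu^\T) \;=\; \mathcal{L}^d(A)\bigl[\U(\ep) - \U(0)\bigr] \;=\; \frac{\mathcal{L}^d(A)}{\al - 1}\bigl(\ep^\al - \al\lam^{\al - 1}\ep\bigr),
$$
which is $O(\ep)$ and strictly negative for $\ep$ small. For the $Wb_2^2$ increment I would exploit the infinite-reservoir feature of $\p\Om$ highlighted in Remark \ref{no_need_bdry^2}: pick any $\gam^* \in OPT(\mu_0, \mu^\T)$ and append to it a plan $\eta$ that delivers the extra mass $\ep \chi_A$ from $\p\Om$. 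Using a Borel measurable selection $\pi : A \to \p\Om$ of a nearest boundary point (available by Kuratowski--Ryll-Nardzewski, or by taking $\pi$ with $|x - \pi(x)| \leq d(x,\p\Om) + \del$ and letting $\del \downarrow 0$), define $\eta := \ep\,(\pi, \mathrm{id})_\#(\mathcal{L}^d|_A)$. Then $\gam^* + \eta \in ADM(\mu_0, \tilde{\mu}^\T)$, so
$$
Wb_2^2(\mu_0, \tilde{\mu}^\T) - Wb_2^2(\mu_0, \mu^\T) \;\leq\; \calC(\eta) \;=\; \ep \int_A d^2(x, \p\Om)\,dx \;\leq\; \ep\, r^2\,\mathcal{L}^d(A).
$$

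Combining these two estimates, the value of $\mathcal{F}(\mu) := E(\mu) + Wb_2^2(\mu_0, \mu)/(2\T)$ at the competitor obeys
$$
\frac{\mathcal{F}(\tilde{\mu}^\T) - \mathcal{F}(\mu^\T)}{\ep\,\mathcal{L}^d(A)} \;\leq\; \frac{\ep^{\al - 1} - \al\lam^{\al - 1}}{\al - 1} + \frac{r^2}{2\T}.
$$
Sending $\ep \downarrow 0$, the right-hand side tends to $-\al\lam^{\al-1}/(\al - 1) + r^2/(2\T)$, which is strictly negative precisely because $r < r^* = \sqrt{2\al\lam^{\al-1}\T/(\al-1)}$. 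Thus for $\ep$ sufficiently small the competitor strictly beats $\mu^\T$ in \eqref{argmin}, contradicting the minimality proved in Lemma \ref{minimizer}. I expect the only real subtlety to be the measurable selection of $\pi$, which is standard; the conceptual content is just the first-order balance between the energy gain $\U'(0)\ep < 0$ (which requires $\lam > 0$) and the transport cost $r^2/(2\T)$, with $r^*$ appearing as the exact crossover radius.
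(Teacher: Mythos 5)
Your proposal is correct and is essentially the same argument as the paper's proof: both derive a contradiction by perturbing $\rho^\T$ to $\rho^\T + \ep\chi_{S_r\cap Z}$, extending an optimal plan by transporting the extra $\ep$-mass from the nearest boundary points, and comparing the first-order energy gain $\U'(0)=-\al\lam^{\al-1}/(\al-1)$ against the transport cost bound $r^2/(2\T)$. The paper takes the nearest-point projection $P$ as a priori defined $\mathcal{L}^d$-a.e.\ rather than invoking a measurable selection theorem, and phrases the final contradiction as $r^*\leq r_0$ rather than as the competitor strictly beating the minimizer, but these are cosmetic differences.
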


\begin{proof}
We prove the lemma by a contradictory argument. We assume that there exists $r_0 \in (0, r^*)$ such that 
$
\mathcal{L}^d (S_{r_0} \cap Z)>0.
$
For a given $z \in \Om$, for convenience, we denote by $P(z)$ a point that is one of the nearest points on $\p \Om$ with respect to $z$ (it is well known that the map $z \in \Om \mapsto P(z)$ is uniquely defined on $\mathcal{L}^d$-a.e.).

Let $\gam \in OPT(\rho^0, \rho^\T)$ and set
$$
\gam_{\ep}:=\gam + \ep (P,id)_{\#} \sigma \quad \textrm{with} \quad
\sigma =  \chi_{S_{r_0} \cap Z} dx.
$$
Then we define $\rho_{\ep}^\T:=(\pi_{\#}^2\gam_{\ep}) |_{\Om}=\rho^\T + \ep \sigma$ and note that $\gam_{\ep} \in ADM(\rho^0, \rho_{\ep}^\T)$.

From the minimality of $\rho^\T$ as in \eqref{argmin}, we get
\begin{equation}\label{diffusion_minimality}
\calE(\rho^\T) + \frac{Wb_2^2(\rho^0, \rho^\T)}{2\T}
\leq
\calE(\rho_{\ep}^\T) + \frac{Wb_2^2(\rho^0, \rho_{\ep}^\T)}{2\T}.
\end{equation}

Observe that
\begin{equation}\label{diffusion_E}
\calE(\rho^\T) - \calE(\rho_{\ep}^\T) 
=  \int_{S_{r_0} \cap Z} + \int_{\Om \backslash (S_{r_0} \cap Z)} 
= \int_{S_{r_0} \cap Z} U (\rho^\T) - U \big(\rho^\T + \ep  \big) 
= \int_{S_{r_0} \cap Z} U(0) - U(\ep)
\end{equation}
and
$$
Wb_2^2(\rho^0, \rho_{\ep}^\T) - Wb_2^2(\rho^0, \rho^\T) 
\leq \int_{\OM \times \OM} |x-y|^2 d\gam^\ep(x,y) - \int_{\OM \times \OM} |x-y|^2 d\gam(x,y) 
$$
\begin{equation}\label{diffusion_Wb2}
=\ep  \int_{S_{r_0} \cap Z} |P(x) -x|^2  dx.
\end{equation}
Combining \eqref{diffusion_minimality}, \eqref{diffusion_E}, and \eqref{diffusion_Wb2}, we have
$$
\int_{S_{r_0} \cap Z} U(0) - U(\ep) \leq 
\frac{\ep}{2\T}  \int_{S_{r_0} \cap Z} |P(x) -x|^2  dx.
$$
Dividing by $\ep$ and letting $\ep \downarrow 0$, we see that
\begin{equation}\nonumber
\frac{\al \lam^{\al-1}}{\al-1} \mathcal{L}^d( S_{r_0} \cap Z)
\leq
\frac{1}{2\T} \int_{S_{r_0} \cap Z} |P(x) -x|^2 dx \leq \frac{r_0^2}{2\T} \mathcal{L}^d (S_{r_0} \cap Z).
\end{equation}
Since we assumed $\mathcal{L}^d (S_{r_0} \cap Z)>0$, we get
$r^*
\leq r_0,
$
which leads to a contradiction.
\end{proof}

\vspace{3mm}
Thanks to Lemma \ref{bdry_diffusion}, we can determine the trace of discrete solutions by using ideas of the heuristic argument of \cite[Proposition 3.7]{FG}.

\begin{proposition}\label{DS_regularity}
Let $\rho^\T$ be a minimizer in \eqref{argmin}. We have the following:
\begin{enumerate}
\item[\emph{(i)}] 
(Case $\lam=0$) For almost every $x \in \Om$, 
\begin{equation}\label{key_inequality1}
0 \leq
\rho^\T (x) 
\leq \Big( \frac{3(\al-1) \emph{diam}(\Om)}{2\al \T} d(x,\p \Om) \Big)^{\frac{1}{\al-1}}.
\end{equation}

\item[\emph{(ii)}]  
(Case $\lam>0$) Let $0<\del< r^*= \sqrt{\frac{2\al \lam^{\al-1}}{\al-1}\T}$. Then for almost every $x \in S_\del$,

\begin{equation}\label{key_inequality2}
\Big( \lam^{\al-1} - \frac{(\al-1)d^2(x,\p \Om)}{2\al \T}\Big)^{\frac{1}{\al-1}} \leq \rho^\T (x) 
\leq \Big(\lam^{\al-1} + \frac{3(\al-1) \emph{diam}(\Om)}{2\al \T} d(x,\p \Om) \Big)^{\frac{1}{\al-1}}.
\end{equation}

\end{enumerate}
As a result, for all $\lam \geq 0$
\begin{equation}\label{sobolev}
(\rho^{\T})^{\al - \frac{1}{2}} - \lam^{\al - \frac{1}{2}} \in H_0^1(\Om).
\end{equation}
\end{proposition}

\begin{proof}
(i) 
Let $\gam \in OPT(\mu^0, \mu^\T)$. Then, by a variant of Brenier's theorem (see \cite[Corollary 2.5]{FG}), there exists a map $S:\Om \ra \OM$, which is the gradient of a convex function, satisfying $\gam|_{\OM \times \Om}=(S, id)_\# \mu^\T$. Fix $z \in \Om$. To prove \eqref{key_inequality1}, without loss of generality, we can assume 
$\rho^\T(z)>0$.

For small $r,\ep>0$, if we define
$$
\gam_{r, \ep} : = \gam - \ep \gam|_{\OM \times B_r(z)}+ \ep (id, P)_\# \gam|_{\OM \times B_r(z)}
$$
and
$$
  (\pi_2)_{\#}\gam_{r,\ep} |_{\Om}:=\mu_{r,\ep}^\T=\rho_{r,\ep}^\T \mathcal{L}^d |_{\Om}, 
$$
then $\gam_{r,\ep} \in ADM(\mu^0 , \mu_{r,\ep}^\T)$ and
\begin{equation}\nonumber 
   \rho_{r,\ep}^\T(y)=
  \begin{cases}
                                   \rho^\T(y) & \text{if  $ y \in \big( B_r(z) \big)^c $}, \\
                                   
                                   (1-\ep)\rho^\T(y) & \text{if  $y \in B_r(z)$}.  \\
  \end{cases}
\end{equation}
Since $\mu_{r,\ep}^\T \in \mathcal{M}_2(\Om)$, by minimality of $\mu^\T$, we have
\begin{equation}\label{minimality}
{\calE}(\mu^\T) + \frac{Wb_2^2(\mu^0, \mu^\T)}{2\T} \leq {\calE}(\mu_{r, \ep}^\T)+ \frac{Wb_2^2(\mu^0, \mu_{r,\ep}^\T)}{2\T},
\end{equation}
which gives
$$
\int_{\Om } U(\rho^\T (y))dy+\frac{1}{2\T} \int_{\OM \times \OM} |x-y|^2 d \gam(x,y)
\leq \int_{\Om} U(\rho_{r,\ep}^\T(y))dy + \frac{1}{2\T} \int_{\OM \times \OM} |x-y|^2 d \gam_{r,\ep}(x,y).
$$
Note that $\gam_{r,\ep}|_{\OM \times \Om}=(S,id)_\# \mu_{r,\ep}^\T$.
Splitting $\int_{\OM \times \OM} \cdot = \int_{\OM \times \Om} \cdot + \int_{\OM \times \p \Om} \cdot$,  we see that
$$
\int_{B_r(z)} U(\rho^\T)dy + \frac{1}{2\T} \int_{\Om} |y-S(y)|^2 \rho^\T(y) dy
$$

$$
\leq \int_{B_r(z)}  U\big(  (1-\ep)\rho^\T(y) \big) dy + \frac{1}{2\T} \int_{\Om}|y-S(y)|^2 \rho_{r,\ep}^{\T}(y)dy + \frac{\ep}{2\T} \int_{\OM \times B_r(z)} |x-P(y)|^2 d\gam(x,y).
$$
Dividing both sides by $\ep$ and letting $\ep \ra 0$, we have
$$
\int_{B_r(z)} U'(\rho^\T(y))\rho^\T(y)dy + \frac{1}{2\T} \int_{B_r(z)} |y-S(y)|^2 \rho^\T(y)dy
$$
$$
\leq \frac{1}{2\T}  \int_{\OM \times B_r(z)} |x-P(y)|^2 d\gam(x,y)=  \frac{1}{2\T}\int_{B_r(z)} |S(y)- P(y)|^2 \rho^\T(y) dy,
$$
where we used that $\gam|_{\OM \times  B_r(z)} = (S,id)_{\#} (\mu^\T |_{B_r(z)})$.

\noindent Using the inequality
$|S(y)-P(y)|^2 \leq |y-S(y)|^2 +  3\textrm{diam}(\Om) |P(y)-y|$, 
we easily get
$$
\int_{B_r(z)} U'(\rho^\T(y))\rho^\T(y)dy
\leq
\frac{3\textrm{diam}(\Om)}{2\T}\int_{B_r(z)} d(y, \p \Om) \rho^\T(y) dy.
$$
Since $\rho^\T(z)>0$ and $U'(\rho^\T)\rho^\T \in L^1(\Om)$,
dividing both sides by $\mathcal{L}^d (B_r(z))$ and letting $r \ra 0$, we finally obtain
\begin{equation*}
U'(\rho^\T(z)) \leq \frac{3 \textrm{diam}(\Om)}{2\T} d(z,\p \Om)  \quad \mathcal{L}^d \textrm{-}a.e. ~z \in \Om,
\end{equation*}
which proves \eqref{key_inequality1}.

(ii) By Lemma \ref{bdry_diffusion}, we know that $\rho^\T>0$ a.e. in $S_\del$.
Hence, the second inequality in \eqref{key_inequality2} can be proved by repeating similar arguments in (i).

To prove the first inequality in \eqref{key_inequality2}, we choose $r>0$ small enough so that $B_r(z) \subset S_\del$ and follow the argument above  by considering the other perturbation,
$$
\tilde{\gam}_{r,\ep}:= \gam + \ep (P,id)_{\#}\rho^\T|_{B_r(z)}.
$$
Let us define
$$
  (\pi_2)_{\#}\tilde{\gam}_{r,\ep} |_{\Om}:=\tilde{\mu}_{r,\ep}^\T=\tilde{\rho}_{r,\ep}^\T \mathcal{L}^d |_{\Om}.
$$
Then $\tilde{\gam}_{r,\ep} \in ADM(\mu^0 , \tilde{\mu}_{r,\ep}^\T)$ and
\begin{equation}\nonumber
   \tilde{\rho}_{r,\ep}^\T(y)=
  \begin{cases}
                                   \rho^\T(y) & \text{if  $ y \in \big( B_r(z) \big)^c $}, \\
                                   
                                   (1+\ep)\rho^\T(y) & \text{if  $y \in B_r(z)$}.  \\
  \end{cases}
\end{equation}
Proceeding similarly to \eqref{minimality}, we obtain
$$
\int_{B_r(z)} U(\rho^\T)dy 
\leq \int_{B_r(z)}  U\big(  (1+\ep)\rho^\T(y) \big) dy +  \frac{\ep}{2\T} \int_{\OM \times \OM} |x-y|^2 d\Big( (P,id)_\# \rho^\T |_{B_r(z)}   \Big)(x,y).
$$
Dividing both sides by $\ep$ and letting $\ep \ra 0$, we get
$$
0 \leq \int_{B_r(z)} U'(\rho^\T(y))\rho^\T(y)dy +\frac{1}{2\T} \int_{B_r(z)} d^2(y, \p \Om) \rho^\T(y)dy
$$
because
$$
\int_{\OM \times \OM} |x-y|^2 d\Big( (P, id)_\# \rho^\T |_{B_r(z)}   \Big)(x,y)=\int_{B_r(z)} |P(y)-y|^2 \rho^\T(y)dy = \int_{B_r(z)} d^2(y, \p \Om) \rho^\T(y)dy.
$$
Since $\rho^\T(z)>0$ and  $U'(\rho^\T)\rho^\T \in L^1(\Om)$, dividing both sides by $\mathcal{L}^d (B_r(z))$ and letting $r \ra 0$, we finally get
\begin{equation}\nonumber
-\frac{d^2(z,\p \Om)}{2\T} \leq U'(\rho^\T(z)) \quad \mathcal{L}^d \textrm{-}a.e. ~z \in S_\del.
\end{equation}
We thus obtain \eqref{key_inequality2}.

It remains to show \eqref{sobolev}. 
From \eqref{Ambrosio}, we know that the slope of $\calE$ at $\rho^\T$ is finite. Hence, using Lemma \ref{slope} we have $\na (\rho^{\T})^{\al - \frac{1}{2}} \in L^2(\Om)$.
Since $(\rho^\T)^{\al-\frac{1}{2}} \in L^{\al/(\al-\frac{1}{2})}(\Om)$, by Gagliardo-Nirenberg interpolation inequality, we obtain
$$
(\rho^{\T})^{\al - \frac{1}{2}} - \lam^{\al - \frac{1}{2}} \in H^1(\Om).
$$
In addition, \eqref{key_inequality1} and \eqref{key_inequality2} imply that the trace of $(\rho^\T)^{\al - \frac{1}{2}}$ should be $\lam^{\al - \frac{1}{2}}$, and therefore we get the desired result.
 \end{proof}

\begin{remark}\label{trace}
Let $(\rho_k^\T)$ be the sequence of minimizers obtained from \eqref{argmin}. We repeatedly apply the above lemma to $\rho_k^\T$. Due to the existence of $S_\delta$ in Proposition \ref{DS_regularity} (ii), we can conclude that for a given $\rho_{0} \in \mathcal{M}_2(\Om)$ with $\calE(\rho_0) <\infty$, the function $t \mapsto (\rho^\T)^{\al - \frac{1}{2}}(t) -\lam^{\al - \frac{1}{2}}$ has zero trace for $t>0$.
Moreover, in the process of taking $\T \downarrow 0$ to determine the trace of its limit, the fact that $S_\delta$ shrinks as $\T \downarrow 0$ gives no problem (see the proof of (iii) in Theorem \ref{main}).
\end{remark}

\vspace{3mm}
\section{Proof of Theorem \ref{main}}\label{sec:thm1}
We are now ready to prove Theorem \ref{main}. Each subsection deals with the proof of (i)-(iv) in order.

\subsection{Convergence}
To prove (i), we divide the proof into two steps.\\
$\bullet$ \textit{Step 1. Convergence of $\rho^\T(t)$ in $\mathcal{M}_2(\Om)$}:
We recall that the set $\{ \rho^\T \}_{\T>0}$ belongs to $\mathcal{M}_{\leq m_{\infty}}$ by \eqref{bound2_time} and $\mathcal{M}_{\leq m_{\infty}}$ is compact in $\mathcal{M}_2(\Om)$ as mentioned in preliminaries. From \eqref{HolderConti}, we know the approximate $\frac{1}{2}$-H\"{o}lder continuity of $(\rho^\T)$.
With the aid of these facts, a refined version of the Ascoli-Arzel\`{a} theorem \cite[Proposition 3.3.1, Remark 2.1.1]{AGS} gives that for any $T>0$  there exists a vanishing sequence $(\T_k^{(1)})$ and $\rho^{(1)} \in  C([0,T], \mathcal{M}_2(\Om))$ such that for every $t \in [0,T]$,
\begin{equation}\nonumber
 Wb_2(\rho^{\T_k^{(1)}}(t) ,\rho^{(1)}(t)) \ra 0 \quad  as \quad k \ra \infty.
\end{equation}
We iterate this process on $[T,2T]$, $[2T,3T]$, and so on. By a diagonal argument, we can find a subsequence $(\T_k)$ and $\rho:[0,\infty) \ra \mathcal{M}_2(\Om)$ such that 
\begin{equation}\label{convergence1}
 Wb_2(\rho^{\T_k}(t) ,\rho(t)) \ra 0 \quad  as \quad k \ra \infty  \quad \forall t\geq0.
\end{equation}
Moreover, the weak lower semicontinuity of $Wb_2$ and \eqref{HolderConti} yield
\begin{equation}\label{HolderConti2}
Wb_2(\rho(t),\rho(s)) \leq \sqrt{2\calE(\rho_0)(t-s)}, \quad 0\leq s \leq t,
\end{equation}
that is, the curve $\rho$ is $\frac{1}{2}$-H\"{o}lder continuous with respect to $Wb_2$.

\noindent $\bullet$ \textit{Step 2. Convergence of $\rho^\T(t)$ in $L^\al \big( (0,T) \times \Om \big)$}:
Let $(\T_k)$ be the sequence obtained in \textit{Step 1}. We claim that for any $T>0$, $\rho^{\T_k}$ strongly converges, up to a subsequence, to $\rho$ in $L^\al \big( (0,T)\times \Om \big)$.
To get a priori estimate, let us observe that from Lemma \ref{slope} and \eqref{Ambrosio},  
$$
\int_0^T \int_{\Om} \Big|  \na \Big(  (\rho^{\T_k}(t))^{\al - \frac{1}{2} } \Big)   \Big|^2 dxdt = \frac{(\al-\frac{1}{2})^2}{\al^2} \int_0^T \int_{\Om} \frac{|\na (\rho^{\T_k}(t))^\al |^2}{\rho^{\T_k}(t) } dxdt
$$

\begin{equation}\label{H^1}
\leq C  \int_0^T |\na \calE|^2(\rho^{\T_k}(t))dt \leq C {\calE}(\mu_0).
\end{equation}
By Gagliardo-Nirenberg interpolation inequality, we  also have
\begin{equation}\nonumber
|| (\rho^{\T_k})^{\al- \frac{1}{2}}  ||_{L^2(\Om)} 
\leq C \Big(  ||\na (\rho^{\T_k})^{\al- \frac{1}{2}} ||_{L^2(\Om)}^\theta  m_{\infty}^{(1-\theta)/\al} + m_{\infty}^{1/\al}         \Big), 
\quad \textrm{with} \quad
\theta:= \frac{
 \frac{\al-\frac{1}{2}}{\al}  - \frac{1}{2} 
}{ \frac{1}{d} +  \frac{\al-\frac{1}{2}}{\al}  - \frac{1}{2} },
\end{equation}
which implies, by Young's inequality,
\begin{equation}\label{most_important}
\int_0^T \int_{\Om} \Big|    (\rho^{\T_k}(t))^{\al - \frac{1}{2} }    \Big|^2 dxdt  
\leq C \Big(  \int_0^T \int_{\Om} \Big|  \na \Big(  (\rho^{\T_k}(t))^{\al - \frac{1}{2} } \Big)   \Big|^2 dxdt + 1  \Big).
\end{equation}
Combining \eqref{H^1},  \eqref{most_important},  and Remark \ref{trace}, we see that
\begin{equation}\label{H_0^1}
\textrm{the curves}~ t \mapsto  (\rho^{\T_k}(t))^{\al - \frac{1}{2} } - \lam^{\al-\frac{1}{2}}~ \textrm{are uniformly bounded in}~ L^2 \big( (0,T); H_0^1(\Om) \big). 
\end{equation}

In order to apply Proposition \ref{ALRS}, we define $X:=L^{\al}(\Om)$,  $g: X \times X \ra [0,\infty]$ given by
\begin{equation}\nonumber 
g(u, \tilde{u}):= 
  \begin{cases}
Wb_2(u, \tilde{u})      & \text{if $u,\tilde{u} \in \mathcal{M}_2(\Om)$ },             
           \\   
           \infty   & \text{  else,}
  \end{cases}
\end{equation}
and
\begin{equation}\label{functional_F}
	\mathscr{F}(u):=
  \begin{cases}
                           \displaystyle{   \int_{\Om} |\na(u^{\al - \frac{1}{2}} )|^2  dx  } & \text{if $u\in \mathcal{M}_2(\Om)$ and $u^{\al-\frac{1}{2}} - \lam^{\al-\frac{1}{2}} \in H_0^1(\Om)$}, \\
                                   
                                   \infty & \text{else}.  \\
  \end{cases}
\end{equation}
We now show that $\mathscr{F}$ and $g$ satisfy the assumptions. Here we adapt the strategy of proof of \cite[Proposition 4.8]{FM}, the main differences are that we define different $\mathscr{F}$, $g$ and use \eqref{H^1} to show \eqref{time_H^1}.

It can be easily checked that $g$ is lower semicontinuous on $L^\al(\Om) \times L^\al(\Om)$ by using the lower semicontinuity with respect to weak convergence in duality with functions in $C_c(\Om)$.

Let us define a sublevel of $\mathscr{F}$, that is, $\mathscr{F}_{\leq c}:= \{ u \in X: \mathscr{F}(u) \leq c  \}$. If $(u_n) \subset \mathscr{F}_{\leq c}$ then, by Rellich's theorem, there exists a subsequence $(n_k)$  such that
\begin{equation}\label{Rellich1}
u_{n_k}^{\al - \frac{1}{2}} \ra \sigma \quad \textrm{in} \quad L^2(\Om).
\end{equation}
Then the following inequality,
\begin{equation}\label{rudin}
\quad |x^p - y^p|\leq
  \begin{cases}
                           |x-y|^p & \text{if $0<p<1$}, \\ 
                                   p|x-y|(x^{p-1}+y^{p-1}) & \text{if $1\leq p < \infty$},  \\
  \end{cases}
\quad   \forall x,y\geq0,
\end{equation}
and \eqref{Rellich1} imply that for any $\al \geq 1$,
$$
u_{n_k} \ra \sigma^{1/(\al-\frac{1}{2})} \quad \textrm{in} \quad L^{2(\al-\frac{1}{2})}(\Om).
$$
Hence, $\mathscr{F}_{\leq c}$ is relatively compact in $L^{\al}(\Om)$ due to the fact that $2(\al-\frac{1}{2}) \geq \al$.

The lower semicontinuity of $\mathscr{F}$ on $L^\al(\Om)$ can be shown as follows:
Let $(u_n)$ be a sequence that converges to $u$ in $L^\al(\Om)$, with $\liminf_{n \ra \infty} \mathscr{F}(u_n) < \infty$. Clearly, there exists a subsequence $(n')$ such that
$$
\liminf_{n \ra \infty} \mathscr{F}(u_n)=\lim_{n' \ra \infty} \mathscr{F}(u_{n'}) \quad \textrm{and} \quad \sup_{n'} \mathscr{F}(u_{n'})<\infty.
$$  
For notational simplicity, without loss of generality, we 
shall write $(u_n)$ instead of $(u_{n'})$. Note that by \eqref{rudin}, we have
\begin{equation}\label{strong}
u_n^{\al-\frac{1}{2}} \ra u^{\al-\frac{1}{2}} \quad \textrm{in} \quad L^{\al/(\al-\frac{1}{2})} (\Om).
\end{equation}
Applying Poincaré's inequality on \eqref{functional_F}, we know $|| u_n^{\al-\frac{1}{2}} ||_{H^1(\Om)} \leq C$ for some $C>0$. 
By using the weak compactness and \eqref{strong}, one can extract a subsequence (still denoted $u_n$) such that
\begin{equation}\label{double_weak_L2}
u_n^{\al-\frac{1}{2}} \wra u^{\al-\frac{1}{2}} \quad \textrm{in} \quad L^2(\Om), \quad  
\na \big( u_n^{\al-\frac{1}{2}} \big) \wra \na u^{\al-\frac{1}{2}} \quad \textrm{in} \quad L^2(\Om).
\end{equation}
Since $u_n^{\al-\frac{1}{2} }$ has trace $\lam^{\al- \frac{1}{2}}$, \eqref{byparts} and \eqref{double_weak_L2} imply that $u^{\al-\frac{1}{2}}$ has also trace $\lam^{\al- \frac{1}{2}}$ (see also the proof of (iii) in Theorem \ref{main}). Therefore, $\mathscr{F}(u)<\infty$.
The elementary inequality $|x|^2-|y|^2 \geq 2(x-y)\cdot y$ gives
\begin{equation}\label{normal}
\mathscr{F}(u_n)- \mathscr{F}(u) \geq 2 \int_{\Om} \big( \na u_n^{\al-\frac{1}{2}} - \na u^{\al-\frac{1}{2}}     \big) \cdot \na u^{\al-\frac{1}{2}} dx.
\end{equation}
As $\na u_n^{\al-\frac{1}{2}} \wra \na u^{\al-\frac{1}{2}}$ in $L^2(\Om)$, taking $\lim_{n \ra \infty}$  in \eqref{normal} directly shows
\begin{equation*}
\liminf_{n\ra \infty} \mathscr{F}(u_n) \geq  \mathscr{F}(u),
\end{equation*}
which means that $\mathscr{F}$ is lower semicontinuous in $L^\al(\Om)$.

Now, set $U=\{ \rho^{\T_k} \}$. Then from \eqref{H_0^1}, we get
$$
\sup_{k} \int_0^T \mathscr{F}(\rho^{\T_k}(t)) dt <\infty.
$$
Next, if $h\geq \T_k$ then 
\begin{equation}\label{AL_limsup2}
\sup_{k} \int_0^{T-h} Wb_2(\rho^{\T_k}(t+h) , \rho^{\T_k}(t))dt \leq    (T-h)\sqrt{4{\calE}(\mu_0) h },
\end{equation}
where we used  \eqref{HolderConti}. When $0<h<\T_k$, choose $N$ for which $(N-1)\T_k < T \leq N\T_k$. Then we see
$$
\int_0^{T-h} Wb_2(\rho^{\T_k}(t+h),\rho^{\T_k}(t)) dt \leq h \sum_{n=0}^{N-1} Wb_2(\rho_n^{\T_k} , \rho_{n+1}^{\T_k})
\leq hN^{1/2} \Big( \sum_{n=0}^{N-1} Wb_2^2(\rho_n^{\T_k}, \rho_{n+1}^{\T_k}) \Big)^{1/2}
$$
\begin{equation}\label{AL_limsup3}
\leq h(2N\T_k)^{1/2}\Big( \sum_{n=0}^{N-1} \frac{Wb_2^2(\rho_n^{\T_k}, \rho_{n+1}^{\T_k})}{2\T_k}\Big)^{1/2} 
\leq h \big(  2(T+1){\calE}(\mu_0)  \big)^{1/2}.
\end{equation}
Thus,
\eqref{AL_limsup2} and \eqref{AL_limsup3} prove \eqref{AL_limsup}.

Therefore, by the conclusion of Proposition \ref{ALRS},  $\rho^{\T_k}$ converges in measure, up to a subsequence, to $\rho^*$ in the sense that
$$
\lim_{k \ra \infty} \Big| \{ t \in (0,T) : ||  \rho^{\T_k} (t) - \rho^* (t) ||_{L^\al(\Om)}  \geq \ep \}  \Big| = 0 \quad \forall \ep>0,
$$
which implies that there is a subsequence (still denoted $\T_k$) such that $\rho^{\T_k}$ converges in time a.e. to $\rho^*$ in $L^\al(\Om)$. For this $(\T_k)$, we know that \eqref{convergence1} still holds, which gives $\rho^* = \rho$ for a.e. because for any $\vphi \in C_c^{\infty}(\Om)$,
$$
\Big| \int_{\Om} (\rho(t) - \rho^*(t) ) \vphi dx \Big| \leq \Big| \int_{\Om} (\rho(t) - \rho^{\T_k}(t) ) \vphi dx \Big| + \Big| \int_{\Om} (\rho^{\T_k}(t) - \rho^*(t) ) \vphi dx \Big| \ra 0 \quad \textrm{as} \quad k \ra \infty.$$
Thus, for a.e. $t \in (0,T)$ we have 
\begin{equation}\label{convergence2}
\rho^{\T_k}(t) \ra \rho(t) \quad \textrm{in} \quad  L^\al(\Om).
\end{equation}
In fact, using \eqref{bound2_time} and dominated convergence theorem, we further see that 
\begin{equation}\label{convergence3}
\rho^{\T_k} \ra \rho \quad \textrm{in} \quad  L^\al \big( (0,T) \times \Om \big).
\end{equation}

\subsection{PME equation}
We now derive an approximation of weak solutions from $\rho^\T(t)$:
Given any $\T>0, ~k\in \N$, let $\gam_k  \in OPT( \rho_{k+1}^\T ,\rho_k^\T)$. For a given $\z \in C_c^{\infty}(\Om)$, we observe that
$$
\Big|\int_{\Om} \rho_k^\T(y) \z(y)dy -\int_{\Om} \rho_{k+1}^\T(x) \z(x)dx - \int_{\OM \times \OM} \na \z(x) \cdot(y-x) d\gam_k(x,y) \Big|
$$
$$
=\Big| \int_{\OM \times \Om} \z(y) d\gam_k(x,y) - \int_{\Om \times \OM} \z(x) d\gam_k(x,y) - \int_{\OM \times \OM} \na \z(x) \cdot (y-x) d\gam_k(x,y)\Big|
$$
$$
=\Big| \int_{\Om \times \Om} \z(y) - \z(x) -\na \z(x)\cdot(y-x) d\gam_k(x,y) + \int_{\p \Om \times \Om} \z(y) d\gam_k(x,y) 
$$

$$
- \int_{\Om \times \p \Om} \z(x) d\gam_k(x,y)  -  \int_{\Om \times \p \Om}  \na \z(x)\cdot (y-x) d\gam_k(x,y) \Big|
$$

$$
\leq \frac{1}{2} \sup_{\Om}|\na^2 \z| \int_{\Om \times \Om} |x-y|^2 d\gam_k(x,y) + \int_{\p \Om \times \Om} |\z(y)| d\gam_k(x,y)
$$

\begin{equation}\label{one_step_WS}
+\int_{\Om \times \p \Om} |\z(x)| d\gam_k(x,y) + \textrm{Lip}(\na \z) \int_{\Om \times \p \Om} |x-y|^2 d\gam_k(x,y).
\end{equation}

\noindent Note that
$$
\int_{\p \Om \times \Om} |\z(y)| d\gam_k(x,y) = \int_{\p \Om \times \rm{supp} (\z)} \frac{|\z(y)|}{|x-y|^2}|x-y|^2 d\gam_k(x,y)
\leq \frac{\sup_{\Om} |\z|}{c_{\z}^2}Wb_2^2(\rho_{k+1}^\T , \rho_k^{\T}),
$$
where $c_\z:=\min d(\p \Om, \textrm{supp}(\z))>0$.
Plugging this into \eqref{one_step_WS} yields
$$
\Big|\int_{\Om} \rho_k^\T(y) \z(y)dy -\int_{\Om} \rho_{k+1}^\T(x) \z(x)dx - \int_{\OM \times \OM} \na \z(x) \cdot(y-x) d\gam_k(x,y) \Big|
$$

$$
\leq \Big(\frac{1}{2} \sup_{\Om}|\na^2 \z |  + 2 \frac{\sup_{\Om} |\z|}{c_\z^2}+ \textrm{Lip}(\na \z)   \Big) Wb_2^2(\rho_{k+1}^\T, \rho_k^\T).
$$
Using Lemma \ref{EL} with $\Phi=\na \z$, we directly have
\begin{equation}\label{estimate1}
\Big|\int_{\Om} \rho_{k+1}^\T \z -\int_{\Om} \rho_k^\T \z - \T \int_{\Om} (\rho_{k+1}^\T)^{\al} \Delta \z \Big| \leq C ~ Wb_2^2(\rho_{k+1}^\T, \rho_k^\T).
\end{equation}

For given $0\leq t_1<t_2$, let $ \lceil \frac{t_1}{\T} \rceil+1=m$ and $\lceil \frac{t_2}{\T} \rceil=n$. 
Summing up from $m$ to $n$ and using \eqref{bound1}, \eqref{estimate1}, we obtain
$$
\Big|\int_{\Om} \rho_{n+1}^\T \z - \int_{\Om} \rho_m^\T \z - \sum_{k=m}^{n} \int_{k\T}^{(k+1)\T} \int_\Om (\rho_{k+1}^\T)^{\al} \Delta \z \Big| 
\leq C \sum_{k=m}^{n} Wb_2^2( \rho_{k+1}^\T, \rho_k^\T) \leq C \T.
$$

\vspace{3mm}

Using \eqref{HolderConti} and Lemma \ref{munu}, we get
\begin{equation}\label{E1}
\Big| \int_{\Om} \rho_{n+1}^\T \z - \int_{\Om} \rho^\T(t_2)\z \Big| 
\leq \Big| \int_{\Om} \rho_{n+1}^\T \z - \int_{\Om} \rho_n^\T\z \Big|  + \Big| \int_{\Om} \rho_{n}^\T \z - \int_{\Om} \rho^\T(t_2) \z \Big| 
\leq C\sqrt{\T}.
\end{equation}
Similarly,
\begin{equation}\label{E2}
\Big|  \int_{\Om} \rho^\T(t_1)\z - \int_{\Om} \rho_m^\T \z  \Big|  \leq C\sqrt{\T}.
\end{equation}
Since the function $t \mapsto \rho^\T(t)$ is piecewise-constant, we observe that
\begin{equation}\label{E3}
\Big| \sum_{k=m}^{n} \int_{k\T}^{(k+1)\T} \int_\Om (\rho_{k+1}^\T)^{\al} \Delta \z -\int_{t_1}^{t_2} \int_{\Om} (\rho^\T(r))^\al \Delta \z \Big| \leq  3 m_\infty  ||\Delta \z||_{L^{\infty}(\Om)} \T.
\end{equation}

\noindent Combining \eqref{E1}-\eqref{E3}, we obtain
\begin{equation}\label{discreteWS}
\Big| \int_{\Om} \rho^\T (t_2) \z  - \int_{\Om} \rho^\T(t_1) \z - \int_{t_1}^{t_2} \int_{\Om} \big( \rho^\T(r) \big)^{\al} \Delta \z \Big| \leq C(\T + \sqrt{\T}),
\end{equation}
where $C$ depends on $\calE(\rho_0)$, $\z$, and $m_\infty$.

To prove \eqref{weak_solution}, for given $0 \leq t_1<t_2$, choose $T \geq t_2$ and set $\T=\T_k$ as obtained in (i).
Letting $\T_k \da 0$ in \eqref{discreteWS}, and using \eqref{convergence1} and \eqref{convergence3}, we conclude that 
\begin{equation}\nonumber
 \int_{\Om} \rho (t_2) \z dx  - \int_{\Om} \rho(t_1) \z dx= \int_{t_1}^{t_2} \int_{\Om} \big( \rho(r) \big)^{\al} \Delta \z ~dx dr, \quad 0\leq t_1 \leq t_2, \quad \forall \z \in C_c^\infty(\Om).
\end{equation}

\subsection{Boundary condition} 
To show \eqref{regularity}, let $(\T_k)$ be the sequence obtained in (i). Since \eqref{H_0^1} still holds for this $(\T_k)$, and since \eqref{convergence3} and \eqref{rudin} imply
$$
(\rho^{\T_k})^{\al-\frac{1}{2}} \ra \rho^{\al-\frac{1}{2}} \quad \textrm{in} \quad L^{\al/(\al-\frac{1}{2}) } \big((0,T) \times \Om \big),
$$
so we have, by weak compactness,
\begin{equation}\label{trace_convergence1}
(\rho^{\T_k})^{\al-\frac{1}{2}} 
\wra \rho^{\al-\frac{1}{2}}
 \quad \textrm{in} \quad L^2 \big((0,T) \times \Om \big)
\end{equation}
and
\begin{equation}\label{trace_convergence2}
\na (\rho^{\T_k})^{\al-\frac{1}{2}} 
\wra \na \rho^{\al-\frac{1}{2}}
 \quad \textrm{in} \quad L^2 \big((0,T) \times \Om \big)
\end{equation}
up to a subsequence (still denoted $(\T_k)$).
Therefore $\rho^{\al-\frac{1}{2}} \in L^2 \big((0,T); H^1(\Om) \big)$.

It remains to show that $\mathcal{T}(\rho^{\al - \frac{1}{2}}(t))=\lam^{\al - \frac{1}{2}}$ for almost every $t \in (0,T)$, where $\mathcal{T}$ is the classical trace operator.
Take any smooth and bounded vector field $\Phi$ on $\OM$. As $(\rho^{\T_k})^{\al - \frac{1}{2}}(t) \in H^1(\Om)$ and $\rho^{\al - \frac{1}{2}}(t) \in H^1(\Om)$ for a.e. $t \in (0,T)$, using \eqref{byparts} we know
$$
\int_{\Om} (\rho^{\T_k})^{\al- \frac{1}{2}}(t) ~ \na \cdot \Phi dx = -\int_{\Om} \na (\rho^{\T_k})^{\al- \frac{1}{2}}(t) \cdot \Phi dx + \int_{\p \Om} (\Phi \cdot \nu) \mathcal{T}\Big((\rho^{\T_k})^{\al- \frac{1}{2}}(t) \Big)	d\mathcal{H}^{d-1},
$$
$$
\int_{\Om} \rho^{\al- \frac{1}{2}}(t) ~ \na \cdot \Phi dx = -\int_{\Om} \na \rho^{\al- \frac{1}{2}}(t) \cdot \Phi dx + \int_{\p \Om} (\Phi \cdot \nu) \mathcal{T}\Big(\rho^{\al- \frac{1}{2}}(t) \Big) d\mathcal{H}^{d-1}.
$$
Due to \eqref{trace_convergence1} and \eqref{trace_convergence2}, we have for all $\eta \in C_c^{\infty}(0,T)$
$$
\int_0^T \eta(t) \Big[   \int_{\p \Om} (\Phi \cdot \nu) \mathcal{T} \Big( (\rho^{\T_k})^{\al- \frac{1}{2}}(t) \Big) d\mathcal{H}^{d-1}  \Big]dt
\ra 
\int_0^T \eta(t)\Big[
\int_{\p \Om} (\Phi \cdot \nu) \mathcal{T} \Big(\rho^{\al- \frac{1}{2}}(t) \Big) d\mathcal{H}^{d-1} \Big]dt
$$
as $k \ra \infty$.
Since it holds from \eqref{sobolev}  that  $\mathcal{T}\big( (\rho^{\T_k})^{\al- \frac{1}{2}}(t) \big)=\lam^{\al - \frac{1}{2} }$, we deduce
$$
 \int_{\p \Om} (\Phi \cdot \nu)
 \lam^{\al- \frac{1}{2}} d\mathcal{H}^{d-1}  
=
\int_{\p \Om} (\Phi \cdot \nu) \mathcal{T} \Big(\rho^{\al- \frac{1}{2}}(t) \Big) d\mathcal{H}^{d-1}
$$
for a.e. $t \in (0,T)$. Hence, $\mathcal{T}\big(\rho^{ \al - \frac{1}{2}}(t)\big)=\lam^{\al - \frac{1}{2}}$, since $\Phi \in C_b^\infty(\OM, \R^d)$ is arbitrary. Therefore, we conclude $\rho^{\al - \frac{1}{2}}(t) -\lam^{\al - \frac{1}{2}} \in H_0^1(\Om)$ and we get the desired result.   $\Box$

\subsection{Curve of maximal slope}
To prove (iv), we note that the inequality \eqref{Ambrosio} is equivalent to 
\begin{equation}\label{almost_EDI}
\calE(\rho_m^{\T}) - \calE(\rho_n^{\T}) 
\geq \frac{1}{2} \int_{m\T}^{n\T}   |\Th^\T (t)|^2 dt 
+ \frac{1}{2} \int_{m\T}^{n\T}  |\na \calE|^2 (\rho^\T(t)) dt \quad \forall  m \leq n \in \N,
\end{equation}
where
$$
\Th^\T(t) = \frac{Wb_2(\rho_i^\T, \rho_{i+1}^\T)}{\T} \quad \textrm{if} \quad t \in (i\T , (i+1)\T].
$$
Fix $T>0$ and let $(\T_k)$ be the sequence obtained in (i). 
From \eqref{almost_EDI}, we get
\begin{equation}\label{uniformly_L^2}
\int_{0}^{T} 
|\Th^{\T_k} (t)|^2 dt 
+ \int_{0}^{T}  |\na \calE|^2 (\rho^{\T_k}(t))dt \leq 2\calE(\rho_0).
\end{equation}
Using \eqref{uniformly_L^2}, there exists a subsequence, not relabeled, such that $\Th^{\T_k} \wra \Th$ in $L^2([0,T])$. From the definition of $\Th^{\T_k}$, we obtain
$$
Wb_2(\rho^{\T_k}(t) , \rho^{\T_k}(s)) 
\leq \int_{ \lceil \frac{s}{\T_k} \rceil \T_k }^{ \lceil \frac{t}{\T_k}  +1  \rceil \T_k}  \Th^{\T_k}(r)dr \quad \forall t\geq s,
$$
which implies by \eqref{convergence1}
$$
Wb_2(\rho(t), \rho(s)) 
\leq \limsup_{k\ra \infty} Wb_2(\rho^{\T_k}(t) , \rho^{\T_k}(s))
\leq \int_s^t \Th(r)dr.
$$
Hence, it follows that 
$$
\mu \in  AC_{loc}^2([0,\infty), \mathcal{M}_2(\Om)),
$$ 
since $T>0$ is arbitrary.  Moreover, it is direct that $|\mu'|(t) \leq \Th(t)$ for a.e. $t\geq0$ and $|\mu'| \in L^2([0,T])$. 
Thanks to the weak convergence of $\Th^{\T_k}$ to $\Th$ and the elementary inequality $a^2-b^2 \geq 2b(a-b)$, we get
\begin{equation}\label{liminf1}
\liminf_{k \ra \infty}  \int_{t_1}^{t_2} 
|\Th^{\T_k} (t)|^2 dt 
\geq  \int_{t_1}^{t_2} 
|\Th(t)|^2 dt 
\geq  \int_{t_1}^{t_2} 
|\mu' (t)|^2 dt.
\end{equation}

Let $0\leq t_1 < t_2$. Clearly, there exist uniquely determined $m_k, n_k \in \N \cup \{ 0\}$ such that $n_k \T_k < t_2 \leq (n_k +1) \T_k$ and  $(m_k-1) \T_k < t_1 \leq m_k \T_k$. We rewrite the inequality \eqref{almost_EDI} as
$$
\calE(\rho_{m_k}^{\T_k}) 
\geq  \calE(\rho_{n_k+1}^{\T_k})  
+ \frac{1}{2} \int_{m_k \T_k}^{(n_k+1) \T_k} 
|\Th^{\T_k} (t)|^2 dt 
+ \frac{1}{2} \int_{m_k \T_k}^{(n_k+1) \T_k}  |\na \calE|^2 (\rho^{\T_k}(t))dt.
$$
Since $(n_k+1) \T_k \geq t_2$ and $(m_k-1) \T_k < t_1$, we see that
\begin{align}\label{triple_liminf}
&\liminf_{k \ra \infty} \calE(\rho_{m_k}^{\T_k}) 
\nonumber \\
&\geq 
\liminf_{k \ra \infty} \calE(\rho_{n_k+1}^{\T_k}) 
+\liminf_{k \ra \infty} \frac{1}{2} \int_{t_1}^{t_2} 
|\Th^{\T_k} (t)|^2 dt 
+  \frac{1}{2} \int_{t_1}^{t_2} \liminf_{k \ra \infty} 
|\na \calE|^2 (\rho^{\T_k}(t)) dt,
\end{align}
where we used Fatou's lemma to obtain
$$
\liminf_{k \ra \infty}  \int_{t_1}^{t_2} 
|\na \calE|^2 (\rho^{\T_k}(t))  dt 
\geq  \int_{t_1}^{t_2} \liminf_{k \ra \infty} 
\Big( |\na \calE|^2 (\rho^{\T_k}(t)) \Big) dt.
$$

On the other hand, by the choice of $(\T_k)$, we know that \eqref{convergence2} still holds. 
Therefore, recalling the definition of $|\na^- \calE|$ in \eqref{def_of_relaxed_slope},  we deduce that 
\begin{equation}\label{liminf2}
\liminf_{k \ra \infty} |\na \calE|(\rho^{\T_k}(t))
\geq
|\na^- \calE|(\rho(t))
\end{equation}
for a.e. $t\geq 0$.

Next, we observe that $Wb_2(\rho_{n_k +1}^{\T_k} ,\rho(t_2)) \leq Wb_2(\rho_{n_k +1}^{\T_k} , \rho^{\T_k}(t_2)) + Wb_2(\rho^{\T_k}(t_2), \rho(t_2))  \ra 0$. 
Since $\calE$ is lower semicontinuous with respect to $Wb_2$
as mentioned in the proof of Lemma \ref{minimizer}, we have
\begin{equation}\label{liminf3}
\liminf_{k \ra \infty} \calE(\rho_{n_k+1}^{\T_k}) \geq \calE(\rho(t_2)).
\end{equation}

\noindent
Thanks to \eqref{convergence2}, we have 
\begin{equation}\label{strong_liminf}
\liminf_{k \ra \infty} \calE(\rho_{m_k}^{\T_k}) = \calE(\rho(t_1)), \quad \mathcal{L}^1\textrm{-}a.e. ~ t_1 \geq0.
\end{equation}

Combining \eqref{liminf1}-\eqref{strong_liminf}, since $T$ is arbitrary,
we conclude that there exists a $\mathcal{L}^1$-negligible set $\calN \subset [0,\infty)$ such that for all $0\leq t_1 \leq t_2$ with $t_1 \notin \calN$,  
\begin{equation*}
{\calE}(\rho(t_1)) \geq {\calE}(\rho(t_2)) + \frac{1}{2}\int_{t_1}^{t_2}  |\mu'|^2(t) dt
+ \frac{1}{2} \int_{t_1}^{t_2} |\na^- \calE(\rho(t))|^2 dt.
\end{equation*} 
Since $\rho_0=\rho_0^\T$, we know $0 \notin \calN$. Then, the function
$$
\vphi(t):=\inf_{s \in [0,t]\backslash \calN} \calE(\rho(s)), \quad t\geq0,
$$
is well-defined and non-increasing.
Since $\vphi(t)=\calE(\rho(t))$ up to $\calN$,
Definition \ref{def:CMS} is satisfied for $\vphi$ by the Lebesgue differentiation theorem.
This completes the proof. \qed

\vspace{3mm}
\appendix
\section{Extension to the drift-diffusion case}
It is worth noting that Theorem \ref{main} can be extended to the drift-diffusion case, in a direction parallel to \cite[Section 4]{FG}. For $\al>1$ and $\lam \ge0$, by considering the free energy functional
\begin{equation*}
\displaystyle{\int_{\Om}}  
  \lt(
  \frac{\rho^{\al} - \al \lam^{\al-1}\rho}{\al-1}   + \lam^\al 
   - V\rho \rt)\,dx,
\end{equation*}
with minimizing movement schemes, we can obtain a weak solution of
\begin{equation*}
  \begin{cases}
                                   \p_t \rho= \Delta \rho ^{\al} - \textrm{div} (\rho \na V) \\ 
                                   \rho(0)=\rho_0  
  \end{cases}
\end{equation*}
subject to the Dirichlet boundary condition
\begin{equation}\label{eq:dd:D}
\rho|_{\partial \Om} = \lam \lt(1 + \frac{(\al -1)V}{\al \lam^{\al-1}} \rt)^{\frac{1}{\al-1}},
\end{equation}
where $V: \overline{\Om} \to \R$ is a given smooth potential function such that the right-hand side of \eqref{eq:dd:D} is non-negative.

We note that the Dirichlet boundary condition arises as the minimizer of the entropy integrand $\rho \mapsto  \frac{\rho^{\al} - \al \lam^{\al-1}\rho}{\al-1}   + \lam^\al 
   - V\rho$. An additional condition $V|_{\partial \Om}\ge 0$ is imposed to ensure the non-negativity of $\rho$ on $\p \Om$.
In particular, if $V$ is compactly supported, then the Dirichlet boundary condition \eqref{eq:dd:D} reduces to the constant value
$$
\rho|_{\p \Om} = \lambda \geq 0.
$$

On the other hand, by formally taking $\al \to 1^+$, we recover the Dirichlet boundary condition corresponding to the heat equation case (see \cite[Section 4]{FG}):
$$
    \rho|_{\partial \Om} = \lambda e^V.
$$


\section{Lower bounds on metric derivatives}
We investigate pointwise lower bounds and vanishing of $|\mu'|$.

\begin{proposition}\label{sub}
Let $\mu(t)=\rho(t) \mathcal{L}^d |_{\Om}$ be the curve in Theorem \ref{main}. Then we have
\begin{enumerate}
\item[\emph{(i)}]
Let $v(t):= -\na \rho^\al(t) / \rho(t)$. The metric derivative satisfies
\begin{equation}\label{metric_derivative_BB}
 |\mu '|(t) \geq ||v(t)||_{L^2(\mu_t; \Om)}
\quad \mathcal{L}^1\textrm{-a.e.}~ t > 0
\end{equation}
whenever
\begin{equation*}
v(t) \in \overline{\{ \na \phi : \phi \in C_c^\infty(\Om) \}}^{L^2(\mu_t; \Om)}.
\end{equation*}

\item[\emph{(ii)}]
For $\mathcal{L}^1$-a.e. $t>0$,
$||v(t)||_{L^2(\mu_t; \Om)}= 0$ implies that
 $|\mu'|(t)=0$.

\end{enumerate}
\end{proposition}

\begin{proof}
Let $\z \in C_c^{\infty}(\Om)$. Given a measure $\mu \in \mathcal{M}_2(\Om)$, we denote $\mu(\z):=\int_\Om \z(x) d\mu(x)$.
For each $t_0 \geq 0$ we have
$$
\rho_{  t_0 + h}(\z) - \rho_{t_0}(\z) = \int_{\Om} \z(x) d\rho_{t_0+h}(x) - \int_{\Om} \z(y) d\rho_{t_0}(y)
=\int_{\OM \times \OM} \z(x) - \z(y) d\gam^h(x,y),
$$
where $ \gam^h \in OPT(\rho_{t_0+h}, \rho_{t_0})$. We define
\begin{equation}\nonumber
H(x,y):=
  \begin{cases}
                                    |\na \z(x)| & \text{if $x=y$,} \\
                                    \\
                                   \displaystyle{\frac{|\z(x) - \z(y)|}{|x-y|} }  & \text{if $x \neq y$.}
  \end{cases}
\end{equation}
Observe that
$$
\Big|  \rho_{t_0 +h}(\z) - \rho_{t_0}(\z) \Big| \leq \int_{\OM \times \OM} |x-y| H(x,y)d\gam^h
$$
\begin{equation}\label{rho_difference}
\leq Wb_2(\rho_{t_0 +h}, \rho_{t_0}) \Big(  \int_{\OM \times \OM} H^2(x,y) d\gam^h \Big)^{1/2}.
\end{equation}
Let $(h_j)_{j \in \N}$ be a sequence such that $h_j \da 0$ and $\gam^{h_j}(\OM \times \OM) \leq \rho_{t_0 +h_j}(\Om) + \rho_{t_0}(\Om)\leq 2 m_\infty$. Then it follows that $(\gam^{h_j})$ is weakly relatively compact in duality with functions in $C_c(\OM \times \OM \backslash \p \Om \times \p \Om)$ (see Remark \ref{no_need_bdry^2}).
Hence we get
\begin{equation}\label{H_convergence}
\int_{\OM \times \OM} H^2(x,y) d\gam^{h_j} 
\ra \int_{\OM \times \OM} H^2(x,y) d\gam_{t_0}
\end{equation}
up to a subsequence.

It follows from the weak lower semicontinuity of $\gam \mapsto \mathcal{C}(\gam)$  and \eqref{HolderConti2} that
$$
\int_{\OM \times \OM} |x-y|^2 d\gam_{t_0} =0 \quad \textrm{with} \quad (\pi^i_\# \gam_{t_0}) |_{\Om} = \rho_{t_0},
\quad i=1,2,
$$
which means that
\begin{equation}\label{diagonal_support}
\textrm{supp} (\gam_{t_0}) \subset \{  (x,x): x \in \OM \}.
\end{equation}
Thanks to \eqref{rho_difference}, \eqref{H_convergence}, and \eqref{diagonal_support}, we have
$$
\lim_{j \ra \infty} \Big| \frac{\rho_{t_0 +h_j}(\z) - \rho_{t_0}(\z) }{h_j} \Big|
 \leq |\mu'|(t_0) \Big(\int_{\OM \times \OM} H^2(x,y) d\gam_{t_0} \Big)^{1/2}
=  |\mu'|(t_0)  \Big(\int_{\Om \times \OM} |\na \z(x)|^2 d\gam_{t_0} \Big)^{1/2}  
$$
 
\begin{equation}\label{riesz1}
=  |\mu'|(t_0)  \Big(\int_{\Om} |\na \z(x)|^2 d\rho_{t_0}(x) \Big)^{1/2} 
= |\mu'|(t_0)  || \na \z ||_{L^2(\rho_{t_0} dx)}.
\end{equation}

On the other hand, using \eqref{weak_solution} and \eqref{riesz1}, we have for a.e. $t_0 > 0$
\begin{equation}\label{LDT+Optimal_plan}
\Big| \int_\Om \na \rho^\al(t_0) \cdot \na \z dx  \Big|
=\Big| \int_\Om \rho^\al(t_0)  \Delta \z  dx \Big|
\leq |\mu'|(t_0)  || \na \z ||_{L^2(\rho_{t_0} dx)}
\end{equation}
where we used the fact that $\rho^\al(t_0) \in W^{1,1}(\Om)$.

Let $v(t_0) \in \overline{\{ \na \phi : \phi \in C_c^\infty(\Om) \}}^{L^2(\mu_{t_0})}$.
Then we get a sequence $(\z_n) \subset C_c^\infty (\Om)$, depending on $t_0$, such that 
\begin{equation}\nonumber
|| \na \rho^{\al-1}(t_0) - \na \z_n  ||_{L^2(\mu_{t_0})} \ra 0    \quad \textrm{as $n \ra \infty $},
\end{equation}
which implies 
\begin{equation}\label{BB_3}
\int_\Om |\na \z_n|^2 \rho(t_0)  dx
\ra 
\int_\Om |\na \rho^{\al-1}(t_0)|^2 \rho(t_0) dx,
\end{equation}

\begin{align}
&\int_\Om \na \rho^\al(t_0) \cdot \na \z_n dx 
= \int_\Om \frac{\al}{\al-1} \na \rho^{\al-1}(t_0) \sqrt{\rho(t_0)}
\cdot \na \z_n \sqrt{\rho(t_0)} dx
  \nonumber \\
&  \ra 
\int_\Om \frac{\al}{\al-1} \na \rho^{\al-1}  (t_0)\sqrt{\rho(t_0)}
\cdot
\na \rho^{\al-1}(t_0) \sqrt{\rho(t_0)} dx 
=\int_\Om \na \rho^\al(t_0) \cdot \na \rho^{\al-1}(t_0) dx 
\label{BB_4}
\end{align}
as $n \ra \infty$.

Combining \eqref{LDT+Optimal_plan} with \eqref{BB_3}-\eqref{BB_4}, we finally obtain 
$$
|\mu'|(t_0) \geq \frac{ \int_\Om \na \rho^\al(t_0) \cdot \na \rho^{\al-1}(t_0) dx}{\sqrt{\int_\Om |\na \rho^{\al-1}(t_0)|^2 \rho(t_0) dx}} = \Big(\int_\Om \frac{|\na \rho^\al (t_0)|^2 }{\rho(t_0)} dx \Big)^{1/2} 
=  \Big(\int_\Om   |v(t_0)|^2 d\mu_{t_0}(x) \Big)^{1/2}
$$
whenever $ || \na \rho^{\al-1}(t_0) ||_{L^2(\rho_{t_0} dx)} \neq 0$, or equivalently, $ || v(t_0) ||_{L^2(\mu_{t_0}; \Om)} \neq 0$. Hence, \eqref{metric_derivative_BB} holds.

\vspace{3mm}
It remains to show (iii).
If we assume $ || v(t_0) ||_{L^2(\mu_{t_0}; \Om)} = 0$, then $\rho(t_0)$ is constant a.e. As $\rho^{\al-\frac{1}{2}}(t_0) - \lam^{\al-\frac{1}{2}} \in H_0^1 (\Om)$, we conclude that $\rho(t_0)=\lam$ a.e.
Therefore, the energy functional $\calE$ attains its global minimum, that is, ${\calE}(\mu(t_0))=0$.
On the other hand, since
$\mu \in  AC_{loc}^2([0,\infty), \mathcal{M}_2(\Om))$, we know that $|\mu'|(t_0)$ exists.
Note that ${\calE}(\mu(t)) \leq {\calE}(\mu(s))$ for $\mathcal{L}^1$-a.e. $s,t$ with $s \leq t$.
Therefore, we see that $Wb_2(\mu(t_0), \mu(t_0 +h))=0$ for a.e. $h>0$, which yields that $|\mu'|(t_0)=0$. This completes the proof.  
\end{proof}


\vspace{3mm}
\section*{Acknowledgments}
D. Kim is supported by the National Research Foundation of Korea(NRF) grant funded by the Ministry of Education (RS-2024-00450030) and the Korea government(MSIT)(grant No. 2022R1A4A1032094).
D. Koo is supported by NRF grant 2022R1A2C1002820 and RS-2024-00406821.
G. Seo was supported by the National Research Foundation of Korea (NRF) grant funded by the Korea government(MSIT) (RS-2023-00219980 and RS-2023-00212227).


\end{document}